\theoremstyle{plain}
\newtheorem{theorem}{Theorem}
\newtheorem{proposition}{Proposition}
\theoremstyle{definition}
\newtheorem{definition}{Definition}
\newtheorem{example}{Example}
\theoremstyle{remark}
\newtheorem{remark}{Remark}
\DeclareMathOperator{\Gr}{Graph}
\DeclareMathOperator{\co}{co}
\DeclareMathOperator{\sign}{sign}
\DeclareMathOperator{\svsign}{Sign}
\DeclareMathOperator{\linhull}{span}
\DeclareMathOperator{\cl}{cl}
\DeclareMathOperator{\determ}{det}
\DeclareMathOperator{\dimension}{dim}
\author{Dolgopolik M.V.\footnote{Institute for Problems in Mechanical Engineering, Russian Academy of Sciences, Saint
Petersburg, Russia. e-mail: maxim.dolgopolik@gmail.com}}
\title{Metric Regularity of Quasidifferentiable Mappings and Optimality Conditions for Nonsmooth Mathematical
Programming Problems}
\begin{document}

\maketitle

\begin{abstract}
This article is devoted to the analysis of necessary and/or sufficient conditions for metric regularity in terms of 
Demyanov-Rubinov-Polyakova quasidifferentials. We obtain new necessary and sufficient conditions for the local metric
regularity of a multifunction in terms of quasidifferentials of the distance function to this multifunction. We also
propose a new MFCQ-type constraint qualification for a parametric system of quasidifferentiable equality and inequality
constraints and prove that it ensures the metric regularity of a multifunction associated with this system. As an
application, we utilize our constraint qualification to strengthen existing optimality conditions for 
quasidifferentiable programming problems with equality and inequality constraints. We also prove the independence of the
optimality conditions of the choice of quasidifferentials and present a simple example in which the optimality
conditions in terms of quasidifferentials detect the non-optimality of a given point, while optimality conditions in
terms of various subdifferentials fail to disqualify this point as non-optimal.
\end{abstract}

\section{Introduction}

Metric regularity plays a very important role in various parts of optimization theory and numerical analysis,
including stability analysis of perturbed optimization problems, subdifferential calculus, analysis of optimality
conditions etc. \cite{BonnansShapiro,Ioffe,Mordukhovich_I,Mordukhovich_II,Auslender,Borwein,Ioffe_book} Necessary
and/or sufficient conditions for metric regularity are usually expressed in terms of various slopes,
subdifferentials and coderivatives \cite{Cominetti,Aze,Ioffe,Mordukhovich_I,Ioffe_book}. However, if one studies
nonsmooth problems with \textit{quasidifferentiable} data and wants to utilize quasidifferential calculus
\cite{DemRub_book,DemRub_collection}, these conditions for metric regularity become very inconvenient, since one has to
compute and use subdifferentials/coderivatives and quasidifferentials simultaneously. In this case it seems more
reasonable to apply necessary and/or sufficient conditions for metric regularity in terms of quasidifferentials. Such
conditions were studied by Uderzo in \cite{Uderzo,Uderzo2,Uderzo2007}.

One of the main goals of this paper is to improve the main results of \cite{Uderzo,Uderzo2,Uderzo2007} and obtain simple
conditions for metric regularity in terms of quasidifferentials. With the use of general results on metric regularity
\cite{Ioffe} we obtain new necessary and sufficient conditions for the metric regularity of multifunctions in terms of
quasidifferentials of the distance function to this multifunction (see~\cite{Dudov} for some results on the
quasidifferentiability of this function). These conditions significantly generalize and improve some results from
\cite{Uderzo}. For example, our conditions, unlike the ones in \cite{Uderzo}, are invariant under the choice of
quasidifferentials. However, both our conditions and the ones in \cite{Uderzo,Uderzo2,Uderzo2007} have a significant
drawback. Namely, one must verify the validity of certain inequalities in a neighbourhood of a given point to apply
these conditions. To overcome this issue, we introduce a new MFCQ-type constraint qualification for a parametric system
of quasidifferentiable equality and inequality constraint and demonstrate that this constraint qualification guarantees
the local metric regularity of a multifunction associated with this system (see~\cite{KuntzScholtes} for a discussion of
constraint qualifications for quasidifferentiable optimization problems with inequality constraints). 

As an application, we utilize our constraint qualification to obtain new necessary optimality conditions for
quasidifferentiable programming problems with equality and inequality constraints that strengthen existing optimality
conditions for these problems in terms of quasidifferentials \cite{Shapiro84,Shapiro86,Polyakova} (optimality conditions
for such problems involving, e.g. the Demyanov difference of quasidifferentials, can be found in \cite{Gao2000}). We
prove the independence of our optimality conditions of the choice of quasidifferentials
(cf.~\cite{Luderer,LudererRosigerWurker}) and present a simple example in which our optimality conditions detect the
non-optimality of a given point, while optimality conditions in terms of Clarke, Michel-Penot, Jeyakumar-Luc, Ioffe and
Mordukhovich subdifferentials fail to disqualify this point as non-optimal.

The paper is organized as follows. Necessary and sufficient conditions for metric regularity of multifunctions
in terms of quasidifferentials are obtained in Section~\ref{Section_MetricRegularity}. In this section, we also
introduce a new MFCQ-type constraint qualification for parametric systems of quasidifferentiable equalities and
inequalities and study its connection with metric regularity. This constraint qualification is applied to the 
derivation of new optimality conditions for quasidifferentiable programming problems in 
Section~\ref{Section_OptimalityConditions}. Finally, some basic definitions and facts from quasidifferential calculus
are collected in Section~\ref{Section_Preliminaries}.

\section{Quasidifferentiable Functions}
\label{Section_Preliminaries}

From this point onwards let $X$ be a real Banach space. Its topological dual space is denoted by $X^*$, whereas the
canonical duality pairing between $X$ and $X^*$ is denoted by $\langle \cdot, \cdot \rangle$. The zero vector of a
vector space $Y$ is denoted by $\mathbb{O}_Y$ or simply by $\mathbb{O}$ when the underline space is clear from the
context.

Let $U \subset X$ be an open set. Recall that a function $f \colon U \to \mathbb{R}$ is called Dini (Hadamard)
directionally differentiable at a point $x \in U$, if for any $h \in X$ there exists the finite limit
\begin{align*}
  f'_D(x, h) &= \lim_{\alpha \to +0} \frac{f(x + \alpha h) - f(x)}{\alpha} \\
  \bigg( f'_H(x, h) &= \lim_{[\alpha, h'] \to [+0, h]} \frac{f(x + \alpha h') - f(x)}{\alpha} \bigg)
\end{align*}
(see~\cite{Giannessi} for a discussion about the limit in the definition of Hadamard directional derivative). Clearly,
if $f$ is Hadamard directionally differentiable at $x$, then $f'_H(x, \cdot) = f'_D(x, \cdot)$. Therefore, it is
natural to refer simply to the directional derivative of $f$ at $x$ and denote it by $f'(x, \cdot)$.

\begin{definition}
A function $f \colon U \to \mathbb{R}$ is called Dini (Hadamard) \textit{quasidifferentiable} at a point $x \in U$ if
$f$ is Dini (Hadamard) directionally differentiable at $x$, and its directional derivative can be represented as the
difference of two continuous sublinear functions or equivalently if there exists a a pair
$\mathscr{D} f(x) = [\underline{\partial} f(x), \overline{\partial} f(x)]$ of convex weak$^*$ compact sets
$\underline{\partial} f(x), \overline{\partial} f(x) \subset X^*$ such that
\begin{equation} \label{DefOfQuasidiff}
  f'(x, h) = \max_{v^* \in \underline{\partial} f(x)} \langle v^*, h \rangle +
  \min_{w^* \in \overline{\partial} f(x)} \langle w^*, h \rangle \quad \forall h \in X.
\end{equation}
The pair $\mathscr{D} f(x)$ is called a Dini (Hadamard) \textit{quasidifferential} of $f$ at $x$, while the sets
$\underline{\partial} f(x)$ and $\overline{\partial} f(x)$ are called the Dini (Hadamard) \textit{subdifferential} and
\textit{superdifferential} of $f$ at $x$ respectively.
\end{definition}

\begin{remark}
Following the usual convention we identify $X^*$ with $X$ in the case when $X$ is either a finite dimensional or a
Hilbert space. Therefore, in particular, if $X = \mathbb{R}^n$, then a quasidifferential is a pair of convex compact
subsets of $\mathbb{R}^n$, while if $X$ is a Hilbert space, then a quasidifferential is a pair of weakly compact convex
subsets of $X$.
\end{remark}

A calculus of quasidifferentiable functions can be found in \cite{DemRub_book}. Here we only mention that any finite
DC (difference-of-convex) function is Hadamard quasidifferentiable. Note also that a quasidifferential of a function
$f$ is not unique, since for any quasidifferential $\mathscr{D} f(x)$ of $f$ at $x$ and any weak$^*$ compact
convex set $C \subset X^*$ the pair $[ \underline{\partial} f(x) + C, \overline{\partial} f(x) - C ]$ is a
quasidifferential of $f$ at $x$ as well. 

In the general case quasidifferential mapping $\mathscr{D} f(\cdot)$ might not possess any continuity properties;
however, for many nonsmooth functions appearing in applications it is \textit{outer semicontinuous} (o.s.c.). Recall
that if a function $f$ is quasidifferentiable in a neighbourhood $U$ of a point $x \in X$, then a quasidifferential
mapping $\mathscr{D} f(\cdot)$ defined in this neighbourhood is said to be o.s.c. at $x$, if the corresponding
multifunctions $\underline{\partial} f \colon U \to X^*$ and $\overline{\partial} f \colon U \to X^*$ are o.s.c. at $x$,
i.e. for any open sets $V_1, V_2 \subset X^*$ such that $\underline{\partial} f(x) \subset V_1$ and
$\overline{\partial} f(x) \subset V_2$ there exists $\delta > 0$ such that $\underline{\partial} f(x') \subset V_1$ and
$\overline{\partial} f(x') \subset V_2$ for all $x' \in U$ with $\| x' - x \| < \delta$. As was pointed out in
\cite{Kuntz}, a quasidifferential of a continuously codifferentiable function is outer semicontinuous
(see~\cite{DemRub_book} for the definition of continuously codifferentiable function). Hence, in particular, the class
of functions for which there exists an o.s.c. quasidifferential mapping is closed under all standard algebraic
operations, the pointwise maximum and minimum of finite families of functions, and the composition with smooth
functions, since the class of continuously codifferentiable functions is closed under all these operations
\cite{DemRub_book,Dolgopolik_AbstrConvApprox,Dolgopolik_CodiffDescent}. Furthermore, any DC function has an o.s.c.
quasidifferential mapping. Indeed, if $f = f_1 - f_2$, where $f_1$ and $f_2$ are finite closed convex functions, then
one can define $\mathscr{D} f(\cdot) = [ \partial f_1(\cdot), - \partial f_2(\cdot) ]$, where $\partial f_i(\cdot)$ is
the subdifferential of $f_i$ in the sense of convex analysis. Note that this quasidifferential is correctly defined and
o.s.c. due to the fact that the subdifferential of a finite closed convex function defined on a Banach space is nonempty
at every point (see, e.g. \cite[Proposition~I.5.2. and Corollary~I.2.5]{EkelandTemam}) and outer semicontinuous.

Let us also recall a certain extension of the definition of quasidifferentiability to the case of vector-valued
functions that was utilized in \cite{Glover92,Uderzo2}.

\begin{definition}
Let $Y$ be a real Banach space, and $U \subset X$ be an open set. A function $F \colon U \to Y$ is called
\textit{scalarly quasidifferentible} at a point $x \in U$, if $F$ is Dini directionally differentiable at $x$, i.e. for
any $h \in X$ there exists the limit
$$
  F'(x, h) = \lim_{\alpha \to +0} \frac{1}{\alpha} \big( F(x + \alpha h) - F(x) \big),
$$
and for any $y^* \in Y^*$ the function $\langle y^*, F'(x, \cdot) \rangle$ can be represented as the difference of
continuous sublinear functions, i.e. there exists a pair of convex weak$^*$ compact sets 
$\underline{\partial} F(x; y^*), \overline{\partial} F(x; y^*) \subset X^*$ such that
$$
  \langle y^*, F'(x, h) \rangle = \max_{v^* \in \underline{\partial} F(x; y^*)} \langle v^*, h \rangle +
  \min_{w^* \in \overline{\partial} F(x; y^*)} \langle w^*, h \rangle   \quad \forall v \in X.
$$
For any $y^* \in Y^*$ the pair 
$\mathscr{D} F(x; y^*) = [\underline{\partial} F(x; y^*), \overline{\partial} F(x; y^*)]$ is called 
a \textit{scalar quasidifferential} of $F$ at $x$ (corresponding to $y^*$).
\end{definition}

\begin{remark}
Below, as usual, we use the term ``quasidifferential'', instead of ``Dini quasidifferential''. Also, when we say
that a function $f$ is quasidifferentiable at a point $x$, we suppose that a quasidifferential of $f$ at $x$ is given.
Alternatively, one can define a quasidifferential as an equivalence class and work with these equivalence classes;
however, in author's opinion this approach leads to somewhat cumbersome formulations of the main results. That
is why we do not adopt it in this article.
\end{remark}

\section{Metric Regularity of Quasidifferentiable Mappings}
\label{Section_MetricRegularity}

In this section we obtain necessary and/or sufficient conditions for the metric regularity of multifunctions in terms of
quasidifferentials. We also introduce an MFCQ-type constraint qualifications for parametric systems of
quasidifferentiable equalities and inequalities and prove that it ensures the metric regularity of a multifunction
associated with this system.

\subsection{General Conditions for Metric Regularity}

Let $(Y, d)$ be a complete metric space, and $F \colon X \rightrightarrows Y$ be a set-valued mapping with closed
values, whose graph is denoted by $\Gr F$. For any $y \in Y$, $r > 0$ and any set $C \subset Y$ denote
$B(y, r) = \{ z \in Y \mid d(y, z) \le r \}$ and $d(y, C) = \inf_{z \in C} d(y, z)$. As usual, we put 
$d(y, \emptyset) = + \infty$.

Recall that $F$ is called \textit{metrically regular} near a point $(\overline{x}, \overline{y}) \in \Gr F$, if there
exist $K > 0$ and $r > 0$ such that
\begin{equation} \label{MetricRegDef}
  d(x, F^{-1}(y)) \le K d(y, F(x)) \quad \forall (x, y) \in B(\overline{x}, r) \times B(\overline{y}, r).
\end{equation}
The greatest lower bound of all $K$ for which the inequality above is satisfied with some $r > 0$ is called the
\textit{norm of metric regularity} of $F$ near $(\overline{x}, \overline{y})$. For the general theory of metric
regularity see \cite{Ioffe,Ioffe_book,Aze}.

At first, our aim is to obtain sufficient conditions for the metric regularity of the set-valued mapping $F$ in the case
when the distance function $x \to d(y, F(x))$ is quasidifferentiable for any $(x, y)$ in a neighbourhood of 
$(\overline{x}, \overline{y})$. To this end, for any $y \in Y$ and $x \in X$ denote $\psi_y(x) = d(y, F(x))$, and define
\begin{equation} \label{StrongSlopeDef}
  |\nabla \psi_y|(x) = 
  \limsup_{u \to x, \psi_y(u) \to \psi_y(x)} \frac{\max\{ \psi_y(x) - \psi_y(u), 0 \}}{\| x - u \|}.
\end{equation}
The quantity $|\nabla \psi_y|(x)$ is called the \textit{strong slope} of $\psi_y$ at $x$.

Recall that under some natural assumptions on the functions $\psi_y(\cdot)$ the validity of the inequality 
$|\nabla \psi_y|(x) > K^{-1}$ for any $(x, y) \notin \Gr F$ in a neighbourhood of $(\overline{x}, \overline{y})$ is
\textit{sufficient} for the metric regularity of $F$ near $(\overline{x}, \overline{y})$ with the norm of metric
regularity no exceeding $K$. In the case when $Y$ is a Banach space, the validity of the inequality 
$|\nabla \psi_y|(x) \ge t^{-1}$ for any such $(x, y)$ and for all $t > K$ is also \textit{necessary} for the metric
regularity of $F$ near $(\overline{x}, \overline{y})$ with the norm of metric regularity no exceeding $K$ (see, e.g.
\cite[Theorem~2b]{Ioffe}). 

In the following theorem we demonstrate how the verification of the inequality $|\nabla \psi_y|(x) > K^{-1}$ (and, thus,
the metric regularity of the multifunction $F$) can be significantly simplified in the case when the distance functions 
$\psi_y(\cdot) = d(y, F(\cdot))$ are quasidifferentiable.

\begin{theorem} \label{Thrm_MetricRegularity_General}
Let for any $y \in Y$ the function $\psi_y(\cdot)$ be lower semicontinuous (l.s.c.), and 
let $(\overline{x}, \overline{y}) \in \Gr F$ and $K > 0$ be given. Suppose that there exists $r > 0$ such that for any 
$(x, y) \in B(\overline{x}, r) \times B(\overline{y}, r)$ with $y \notin F(x)$ the function $\psi_y(\cdot)$ is
quasidifferentiable at $x$, and there exists $w^* \in \overline{\partial} \psi_y(x)$ for which
\begin{equation} \label{DistSubdiffQuasiDiff}
  d\big( \mathbb{O}, \underline{\partial} \psi_y(x) + w^* \big) > \frac{1}{K}.
\end{equation}
Then for any $(x, y) \in B(\overline{x}, r) \times B(\overline{y}, r)$ such that 
$K d(y, F(x)) < r - d(x, \overline{x})$ one has $d(x, F^{-1}(y)) \le K d(y, F(x))$, which, in particular, implies that
the set-valued mapping $F$ is metrically regular near $(\overline{x}, \overline{y})$ with the norm of metric regularity
not exceeding $K$.

Moreover, suppose that $Y$ is a Banach space, $X$ is finite dimensional, and for any $y \in Y$ the functions
$\psi_y(\cdot)$ are Hadamard quasidifferentiable on $B(\overline{x}, r)$ with some $r > 0$. Then for 
the metric regularity of $F$ near $(\overline{x}, \overline{y})$ with the norm of metric regularity not exceeding $K$ it
is necessary and sufficient that for any $t > K$ there exists a neighbourhood $U$ of $(\overline{x}, \overline{y})$ such
that for any $(x, y) \in U \setminus \Gr F$ there exists $w^* \in \overline{\partial} \psi_y(x)$ for which 
$d(\mathbb{O}, \underline{\partial} \psi_y(x) + w^*) \ge t^{-1}$.
\end{theorem}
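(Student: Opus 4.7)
The plan is to translate the quasidifferential hypothesis into a lower bound on the strong slope $|\nabla \psi_y|(x)$ and then invoke the strong-slope characterisation of metric regularity \cite[Theorem~2b]{Ioffe}. The bridge between the two viewpoints is the classical inequality
$$
  |\nabla \psi_y|(x) \;\ge\; \sup_{\|h\|\le 1}\bigl(-\psi_y'(x,h)\bigr),
$$
which follows directly from the Dini directional differentiability of $\psi_y$ by taking $u = x+\alpha h$ with $\alpha\downarrow 0$ inside definition \eqref{StrongSlopeDef} (the side-condition $\psi_y(u)\to\psi_y(x)$ is automatic along such rays).

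For the sufficiency part I would fix $(x,y)$ in the punctured neighbourhood and any admissible $w^*\in\overline{\partial}\psi_y(x)$, and then use the quasidifferential representation together with the elementary estimate $\min_{w'\in\overline{\partial}\psi_y(x)}\langle w', h\rangle\le\langle w^*, h\rangle$ to obtain the pointwise lower bound
$$
  -\psi_y'(x,h)\;\ge\;-\sigma_{\underline{\partial}\psi_y(x)+w^*}(h),
$$
where $\sigma_C$ denotes the support function. The next step is the Hahn--Banach/separation identity
$$
  \sup_{\|h\|\le 1}\bigl(-\sigma_{C}(h)\bigr) \;=\; d(\mathbb{O},C)
$$
valid for any weak$^*$-compact convex $C\subset X^*$ (upper bound by Cauchy--Schwarz, lower bound by weak$^*$ separation of $\mathbb{O}$ from $C$). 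Combined with \eqref{DistSubdiffQuasiDiff} applied to $C=\underline{\partial}\psi_y(x)+w^*$, this gives $|\nabla\psi_y|(x)>K^{-1}$ on $\bigl(B(\overline x,r)\times B(\overline y,r)\bigr)\setminus\Gr F$. Feeding this into \cite[Theorem~2b]{Ioffe} yields metric regularity with norm $\le K$; the refined ball condition $Kd(y,F(x))<r-d(x,\overline x)$ also drops out because Ioffe's theorem is proved via an Ekeland-principle argument whose perturbation radius is exactly controlled by this quantity.

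For the necessary-and-sufficient part in the finite-dimensional Hadamard setting, I would upgrade the previous inequality into the equality
$$
  |\nabla\psi_y|(x)\;=\;\max_{w^*\in\overline{\partial}\psi_y(x)} d\bigl(\mathbb{O},\underline{\partial}\psi_y(x)+w^*\bigr).
$$
The $\ge$ direction is already in hand. For $\le$, writing an arbitrary sequence $u_k\to x$ as $u_k=x+\alpha_k h_k$ with $\alpha_k=\|u_k-x\|$ and $\|h_k\|=1$, compactness of the unit sphere yields a cluster direction $h$, and Hadamard directional differentiability gives the uniform expansion $\psi_y(u_k)-\psi_y(x)=\alpha_k\psi_y'(x,h)+o(\alpha_k)$ along the subsequence; hence $|\nabla\psi_y|(x)\le\max_{\|h\|=1}(-\psi_y'(x,h))_+$. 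Choosing, for each admissible $h$, a minimiser $w^*_h\in\overline{\partial}\psi_y(x)$ of $\langle\,\cdot\,,h\rangle$ and reusing the separation identity shows that the right-hand side is attained at some $w^*\in\overline{\partial}\psi_y(x)$, giving the displayed equality. The stated necessary-and-sufficient condition then follows verbatim from the necessary direction of \cite[Theorem~2b]{Ioffe}.

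The main obstacle I foresee is the $\le$ direction of the slope-quasidifferential equality: the general Banach-space inequality $|\nabla f|(x)\ge\sup(-f'(x,h))$ cannot be reversed without extra structure, so the argument must genuinely exploit \emph{both} Hadamard (rather than merely Dini) differentiability, which provides the uniform remainder along varying directions, \emph{and} finite-dimensionality of $X$, which supplies the compactness needed to extract the cluster direction $h$ from the blow-up quotients $h_k$.
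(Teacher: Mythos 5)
Your proposal is correct and follows essentially the same route as the paper: translate condition \eqref{DistSubdiffQuasiDiff} into the slope bound $|\nabla \psi_y|(x) > K^{-1}$ via weak$^*$ separation and a ray argument, feed this into \cite[Theorem~2b]{Ioffe}, and in the finite-dimensional Hadamard case identify the strong slope with $\max_{w^* \in \overline{\partial} \psi_y(x)} d(\mathbb{O}, \underline{\partial} \psi_y(x) + w^*)$. The only difference is that you prove inline (compactness of the unit sphere plus the support-function identity $\sup_{\|h\| \le 1}(-\sigma_C(h)) = d(\mathbb{O}, C)$) the two ingredients the paper simply cites, namely \cite[Proposition~2.8]{Aze} for $|\nabla \psi_y|(x) = -\min_{\|h\|=1} \psi_y'(x, h)$ and the Demyanov--Rubinov steepest-descent formula.
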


\begin{proof}
Let us show that under the assumptions of the theorem one has
$$
  | \nabla \psi_y |(x) > K^{-1} \quad 
  \forall (x, y) \in \Big( B(\overline{x}, r) \times B(\overline{y}, r) \Big) \setminus \Gr F.
$$
Then applying \cite[Theorem~2b]{Ioffe} one obtains the desired result.

Indeed, fix $(x, y) \in B(\overline{x}, r) \times B(\overline{y}, r)$ with $y \notin F(x)$. From
\eqref{DistSubdiffQuasiDiff} it follows that for some $\varepsilon > 0$ the convex compact subsets 
$B(\mathbb{O}, K^{-1} + \varepsilon)$ and  $\underline{\partial} \psi_y(x) + w^*$ of the space $X^*$ endowed with the
weak$^*$ topology are disjoint. Applying the separation theorem one obtains that there exists $h \in X$ with 
$\| h \| = 1$ such that
$$
  \langle v^*, h \rangle \le \langle x^*, h \rangle \quad \forall v^* \in \underline{\partial} \psi_y(x) + w^*
  \quad \forall x^* \in B(\mathbb{O}, K^{-1} + \varepsilon)
$$
or equivalently $\langle v^*, h \rangle \le - K^{-1} - \varepsilon < - K^{-1}$ for any 
$v^* \in \underline{\partial} \psi_y(x) + \{ w^* \}$. Hence with the use of the definition of quasidifferential
(see~\eqref{DefOfQuasidiff}) it is easy to check that $\psi_y'(x, h) < K^{-1}$. Therefore there exists a sequence 
$\{ \alpha_n \} \subset (0, + \infty)$ such that $\alpha_n \to 0$ as $n \to \infty$ and
$$
  \lim_{n \to \infty} \frac{\psi_y(x + \alpha_n h) - \psi_y(x)}{\alpha_n} < - \frac{1}{K}.
$$
Consequently, $\psi_y(x) - \psi_y(x + \alpha_n h) > 0$ for any sufficiently large $n \in \mathbb{N}$, and
$$
  \limsup_{n \to \infty} \frac{\max\{ \psi_y(x) - \psi_y(x + \alpha_n h), 0 \}}{\alpha_n} > \frac{1}{K},
$$
which due to \eqref{StrongSlopeDef} yields $|\nabla \psi_y|(x) > K^{-1}$, since $\| h \| = 1$.

Let us now prove the second part of the theorem. Indeed, by \cite[Theorem~2b]{Ioffe} the multifunction $F$ is metrically
regular near $(\overline{x}, \overline{y})$ with the norm of metric regularity not exceeding $K$ iff for any $t > K$
there exists a neighbourhood $U$ of $(\overline{x}, \overline{y})$ such that $| \nabla \psi_y |(x) \ge t^{-1}$ for any 
$(x, y) \in U \setminus \Gr F$.

Taking into account the facts that $X$ is finite dimensional and the functions $x \to \psi_y(x)$ are Hadamard
quasidifferentiable, and applying \cite[Proposition~2.8]{Aze} one obtains that
$| \nabla \psi_y |(x) = - \min_{ \| h \| = 1} \psi_y'(x, h)$. Hence with the use of the explicit expression for the rate
of steepest descent of a quasidifferentiable function (see~\cite[Section~V.3.1]{DemRub_book}) one gets
$$
  | \nabla \psi_y |(x) = \max_{w^* \in \overline{\partial} \psi_y(x)} 
  \min_{v^* \in \underline{\partial} \psi_y(x) + \{ w^* \}} \| v^* \|,
  \quad \text{if } | \nabla \psi_y |(x) > 0.
$$
Consequently, $| \nabla \psi_y |(x) \ge t^{-1}$ iff $d(\mathbb{O}, \underline{\partial} \psi_y(x) + w^*) \ge t^{-1}$ for
some $w^* \in \overline{\partial} \psi_y(x)$, which implies the required result.	 
\end{proof}

\begin{remark} 
Taking into account the definition of quasidifferential \eqref{DefOfQuasidiff} it is easy to check that
condition~\eqref{DistSubdiffQuasiDiff} is satisfied for some $w^* \in \overline{\partial} \psi_y(x)$ iff there exists 
$h \in X$ with $\| h \| = 1$ such that $\psi_y'(x, h) < - K^{-1}$. Therefore, condition~\eqref{DistSubdiffQuasiDiff} is
invariant with respect to the choice of quasidifferentials of the functions $\psi_y$, since the directional derivative
$\psi_y'(x, \cdot)$ obviously does not depend on the choice of quasidifferential.
\end{remark}

\begin{remark} \label{Remark_ComparisonWithUderzo}
Sufficient conditions for the metric regularity of a continuous single-valued mapping $F$ between Banach
spaces in terms of quasidifferentials of the functions $\psi_y(x) = \| y - F(x) \|$ were first obtained by Uderzo
\cite{Uderzo} (see also \cite{Uderzo2}). However, the conditions in \cite{Uderzo} are more restrictive then the ones
stated in the theorem above. Indeed, by \cite[Theorem~4.3]{Uderzo} for the metric regularity of $F$ near a point
$(\overline{x}, F(\overline{x}))$ it is sufficient that there exist $m > 0$ and $r > 0$ such that for any 
$x \in B(\overline{x}, r)$ and $y \in B(F(\overline{x}), r)$ with $y \ne F(x)$ one has
\begin{equation} \label{Uderzo_Cond}
  d(\mathbb{O}, \underline{\partial} \psi_y(x) + w^*) > m \quad
  \forall w^* \in \overline{\partial} \psi_y(x).
\end{equation}
It is easy to see that this condition fails to hold true even for the very simple function 
$F(x_1, x_2) = |x_1| - |x_2|$, when $\overline{x} = \mathbf{0}_2$ and $\overline{y} = 0$ 
(here $X = \mathbb{R}^2$, $Y = \mathbb{R}$, and $\mathbf{0}_n$ is the zero vector from $\mathbb{R}^n$). Indeed, for 
$x = \mathbf{0}_2$ and any $y > 0$ a quasidifferential of the function $\psi_y(x) = |y - F(x)|$ has the form
$$
  \underline{\partial} \psi_y(\mathbf{0}_2) = \co\left\{ \begin{pmatrix} 0 \\ 1 \end{pmatrix}, 
  \begin{pmatrix} 0 \\ -1 \end{pmatrix} \right\}, \quad
  \overline{\partial} \psi_y(\mathbf{0}_2) = \co\left\{ \begin{pmatrix} 1 \\ 0 \end{pmatrix}, 
  \begin{pmatrix} -1 \\ 0 \end{pmatrix} \right\},
$$
and for $w^* = \mathbf{0}_2 \in \overline{\partial} \psi_y(\mathbf{0}_2)$ one has 
$\mathbf{0}_2 \in \underline{\partial} \psi_y(\mathbf{0}_2) + w^*$. Thus, condition \eqref{Uderzo_Cond} is not
satisfied. On the other hand, one can check that sufficient conditions from Theorem~\ref{Thrm_MetricRegularity_General}
are satisfied. Indeed, fix any $x \in \mathbb{R}^2$ and $y \in \mathbb{R}$ such that $y \ne F(x)$. Applying standard
rules of quasidifferential calculus \cite[Section~III.2]{DemRub_book} to the function $\psi_y(x) = |y - |x_1| + |x_2||$
one gets that
$$
  \underline{\partial} \psi_y(x) = \left\{ \begin{pmatrix} 0 \\ \svsign(x_2) \end{pmatrix} \right\}, \quad
  \overline{\partial} \psi_y(x) = \left\{ \begin{pmatrix} - \svsign(x_1) \\ 0 \end{pmatrix} \right\}
$$
in the case $y > |x_1| - |x_2|$, and
$$
  \underline{\partial} \psi_y(x) = \left\{ \begin{pmatrix} \svsign(x_1) \\ 0 \end{pmatrix} \right\}, \quad
  \overline{\partial} \psi_y(x) = \left\{ \begin{pmatrix} 0 \\ - \svsign(x_2) \end{pmatrix} \right\}
$$
in the case $y < |x_1| - |x_2|$. Here $\svsign(t) = \sign(t)$, if $t \ne 0$, and $\svsign(0) = [-1, 1]$. Let the space
$X = \mathbb{R}^2$ be equipped with the Euclidean norm. Then we have the following two cases:
\begin{enumerate}
\item{if $y > F(x)$, then setting $w^* = (-\sign(x_1), 0)^T \in \overline{\partial} \psi_y(x)$ in the case
$x_1 \ne 0$ and $w^* = (1, 0)^T \in \overline{\partial} \psi_y(x)$ in the case $x_1 = 0$ one gets that
$d( \mathbf{0}_2, \underline{\partial} \psi_y(x) + w^*) = \sqrt{2}$, provided $x_2 \ne 0$, and
$d( \mathbf{0}_2, \underline{\partial} \psi_y(x) + w^*) = 1$, if $x_2 = 0$;
}

\item{if $y < F(x)$, then setting $w^* = (0, -\sign(x_2))^T \in \overline{\partial} \psi_y(x)$ in the case $x_2 \ne 0$
and $w^* = (0, 1) \in \overline{\partial} \psi_y(x)$ in the case $x_2 = 0$ one obtains that 
$d( \mathbf{0}_2, \underline{\partial} \psi_y(x) + w^*) = \sqrt{2}$, if $x_1 \ne 0$, and
$d( \mathbf{0}_2, \underline{\partial} \psi_y(x) + w^*) = 1$, if $x_1 = 0$.
}
\end{enumerate}
Thus, condition \eqref{DistSubdiffQuasiDiff} is satisfied with any $K > 1$, and the function $F(x) = |x_1| - |x_2|$ is
metrically regular near the point $(\mathbf{0}_2, 0)$ by Theorem~\ref{Thrm_MetricRegularity_General}. 

Note also that condition~\eqref{Uderzo_Cond}, unlike \eqref{DistSubdiffQuasiDiff}, depends on the choice of
quasidifferential. For instance, it is not valid for the identity function $F(x) = x$, which is metrically regular near
any point (here $X = Y = \mathbb{R}$), if one chooses 
the pair $\underline{\partial} \psi_y(x) = - \sign(y - x) + [-1, 1]$ and $\overline{\partial} \psi_y(x) = [-1, 1]$, as a
quasidifferential of the function $\psi_y(x) = |y - F(x)| = |y - x|$ at every point $x$ such that $y \ne x$.
\end{remark}

Let us give another simple example illustrating Theorem~\ref{Thrm_MetricRegularity_General}.

\begin{example}
Let $X = Y = \mathbb{R}$ and $F(x) = \min\{ x, \max\{ x^3, 0 \} \}$, i.e. $F$ is single-valued. Let us check whether
this function is metrically regular near the point $(0, 0)$ with the use of Theorem~\ref{Thrm_MetricRegularity_General}.
By definition one has $\psi_y(x) = |y - \min\{ x, \max\{ x^3, 0 \} \}|$. The function $\psi_y(\cdot)$ is
quasidifferentiable and locally Lipschitz continuous, which implies that it is Hadamard quasidifferentiable. Applying
standard rules of quasidifferential calculus \cite[Section~III.2]{DemRub_book} one gets that
\begin{gather*}
  \underline{\partial} \psi_y(x) = \begin{cases}
    \{ - 1 \}, & \text{if } x \notin [0, 1], \\
    \{ - 3 x^2 \}, & \text{if } x \in (0, 1), \\
    [-3, -1], & \text{if } x = 1, \\
    [-1, 0], & \text{if } x = 0
  \end{cases}
  \quad \overline{\partial} \psi_y(x) = \{ 0 \} \quad \text{in the case } y > F(x), \\
  \underline{\partial} \psi_y(x) = \{ 0 \}, \quad
  \overline{\partial} \psi_y(x) = \begin{cases}
    \{ 1 \}, & \text{if } x \notin [0, 1], \\
    \{ 3 x^2 \}, & \text{if } x \in (0, 1), \\
    [1, 3], & \text{if } x = 1, \\
    [0, 1], & \text{if } x = 0
  \end{cases}
  \quad \text{in the case } y < F(x).
\end{gather*}
Therefore for $y = 0$ and any $x \in (0, 1)$ (note that in this case $y \ne F(x)$) one has 
$d(0, \underline{\partial} \psi_y(x) + w^*) = 3 x^2$ for any $w^* \in \overline{\partial} \psi_y(x)$. Choosing
sufficiently small $x > 0$ one obtains that $d(0, \underline{\partial} \psi_0(x) + w^*) < t^{-1}$ for any prespecified
$t > 0$ and for all $w^* \in \overline{\partial} \psi_0(x)$. Thus, by the second part of
Theorem~\ref{Thrm_MetricRegularity_General} one can conclude that $F$ is not metrically regular near $(0, 0)$. Let us
also verify this directly. Indeed, it is easily seen that $F^{-1}(y) = y$, if $y \notin [0, 1]$, and 
$F^{-1}(y) = y^{1/3}$, if $y \in [0, 1]$. Applying the definition of metric regularity \eqref{MetricRegDef} with $x = 0$
one gets that for the function $F$ to be metrically regular near $(0, 0)$ it is necessary that there exists $K > 0$
such that
$$
  d(0, F^{-1}(y)) = y^{1/3} \le K y = d(y, F(0))
$$
for any sufficiently small $y > 0$, which is obviously impossible. 
\end{example}

\subsection{Parametric Systems of Equalities and Inequalities}

In order to verify the metric regularity of a multifunction with the use of Theorem~\ref{Thrm_MetricRegularity_General},
one must check that  condition~(\ref{DistSubdiffQuasiDiff}) holds true at every point in a neighbourhood of a given
point $(\overline{x}, \overline{y})$, which is a common drawback of general results on metric regularity
(cf.~\cite{Ioffe,Aze}). However, as in the case of sufficient conditions in terms of various subdifferentials and
coderivatives, in some particular cases one can obtain sufficient conditions for the metric regularity that involve only
quasidifferentials of certain functions at the point $(\overline{x}, \overline{y})$ itself. Our next goal is to obtain
such conditions for a set-valued mapping associated with a parametric system of nonlinear equality and inequality
constraints.

Let $Y$ be a real Banach space, $P$ be a metric space of parameters, and let also $F \colon X \times P \to Y$ and 
$g_i \colon X \times P \to \mathbb{R}$, $i \in I = \{ 1, \ldots, m \}$, be given functions. For any $y \in Y$ and 
$z_i \in \mathbb{R}$, $i \in I$, consider the following parametric system
\begin{equation} \label{NonlinearSystem}
  F(x, p) = y, \quad g_i(x, p) \le z_i \quad i \in I.
\end{equation}
Denote by $\mathcal{S}(p, y, z) = \{ x \in X \mid F(x, p) = y, \: g_i(x, p) \le z_i, \: i \in I \}$ the solution set of
this system, where $z = (z_1, \ldots, z_m)^T \in \mathbb{R}^m$. We also denote 
$\mathcal{S}(p) = \mathcal{S}(p, \mathbb{O}_Y, \mathbf{0}_m)$, and sometimes use the notation $F_p(x) = F(x, p)$.

In the case when the functions $F(\cdot, p)$ and $g_i(\cdot, p)$ are continuously Fr\'{e}chet differentiable, 
the multifunction $\Phi_p(x) = \{ F(x, p) \} \times \prod_{i = 1}^m [g_i(x, p), + \infty)$ associated with system
\eqref{NonlinearSystem} is metrically regular near a given point if and only if the Mangasarian-Fromovitz constrain
qualification holds at this point, i.e. the Fr\'{e}chet derivative $D_x F(x, p)$ is a surjective mapping, and there
exists $h \in X$ such that $D_x F(x, p)[h] = \mathbb{O}$, while $D_x g_i(x, p)[h] < 0$ for any $i \in I$ such that 
$g_i(x, p) = z_i$ (see, e.g. \cite[Corollary~2.1]{Cominetti}). Our aim is to extend this results to the case when 
the functions $F(\cdot, p)$ and $g_i(\cdot, p)$ are only quasidifferentiable.

Being inspired by the results of \cite{Uderzo2}, let us introduce a constraint qualification in terms of
quasidifferentials that ensures the metric regularity of the multifunction associated with 
system \eqref{NonlinearSystem}. For the sake of shortness we consider the case $y = \mathbb{O}$ and $z = \mathbf{0}_m$
only, since the general case can be easily reduced to this one by replacing $F(x, p)$ with $F(x, p) - y$, and 
$g_i(x, p)$ with $g_i(x, p) - z_i$. Suppose that the functions $g_i(\cdot, \overline{p})$, $i \in I$, are
quasidifferentiable at a point $\overline{x}$ such that $\overline{x} \in \mathcal{S}(\overline{p})$, and the mapping
$F(\cdot, \overline{p})$ is scalarly quasidifferentiable at this point, and denote their quasidifferentials at this
point by $\mathscr{D}_x g_i(\overline{x}, \overline{p})$ and $\mathscr{D}_x F(\overline{x}, \overline{p}; y^*)$, 
$y^* \in Y^*$, respectively. Introduce the sets 
\begin{align*}
  [\mathscr{D}_x g_i(\overline{x}, \overline{p})]^+ 
  &= \underline{\partial}_x g_i(\overline{x}, \overline{p})
  + \overline{\partial}_x g_i(\overline{x}, \overline{p}), \\
  [\mathscr{D}_x F(\overline{x}, \overline{p}; y^*)]^+ 
  &= \underline{\partial}_x F(\overline{x}, \overline{p}; y^*) 
  + \overline{\partial}_x F(\overline{x}, \overline{p}; y^*).
\end{align*}
These sets are sometimes called \textit{quasidifferential sums}, and they were considered e.g. in \cite{Uderzo2}.
Note that quasidifferential sums are \textit{not} invariant with respect to the choice of the corresponding
quasidifferentials. For example, for the function $f(x) = |x|$ both $\mathscr{D}_1 f(0) = [[-1, 1], \{ 0 \}]$
and $\mathscr{D}_2 f(0) = [[-2, 2], [-1, 1]]$ are quasidifferentials of $f$ at $x$, and 
$[\mathscr{D}_1 f(0)]^+ = [ -1, 1] \ne [-3, 3] = [\mathscr{D}_2 f(0)]^+$. Thus, all conditions below are not invariant
with respect to the choise of quasidifferentials.

For any $x \in X$ and $p \in P$ define $I(x, p) = \{ i \in I \mid g_i(x, p) = 0 \}$, and 
denote $S_X = \{ x \in X \mid \| x \| = 1 \}$.

\begin{definition} \label{Def_qdMFCQ}
One says that \textit{the Mangasarian-Fromovitz constraint qualification in terms of quasidifferentials} (q.d.-MFCQ)
holds at $(\overline{x}, \overline{p})$, if 
\begin{equation} \label{HiddenLinearIndependence}
  \inf_{y^* \in S_{Y^*}} 
  \inf\{ \| v^* \| \colon v^* \in [\mathscr{D}_x F(\overline{x}, \overline{p}; y^*)]^+ \} > 0,
\end{equation}
and there exists $\overline{h} \in X$ such that $\langle v^*, \overline{h} \rangle = 0$ for all 
$v^* \in [\mathscr{D}_x F(\overline{x}, \overline{p}; y^*)]^+$ and $y^* \in Y^*$, while
$\langle v^*, \overline{h} \rangle < 0$ for all $v^* \in [\mathscr{D}_x g_i(\overline{x}, \overline{p})]^+$ and 
$i \in I(\overline{x}, \overline{p})$.
\end{definition}

Let us point out how q.d.-MFCQ is connected with the standard MFCQ. To this end, recall that nonempty subsets
$A_1, \ldots, A_s$ of a linear space $E$ are said to be \textit{linearly independent} (or to have \textit{full rank}),
if the inclusion $\mathbb{O} \in \lambda_1 A_1 + \ldots + \lambda_n A_n$ with $\lambda_i \in \mathbb{R}$ is valid only
for $\lambda_i = 0$, $i \in \{1, \ldots, s \}$. Clearly, the sets $A_i$, $i \in \{ 1, \ldots, s \}$ are linearly
independent iff for any $x_i \in A_i$, $i \in \{ 1, \ldots, s \}$, the vectors $x_1, \ldots, x_s$ are linearly
independent. 

\begin{proposition} \label{Prp_Equiv_qd_MFCQ}
Let $Y$ be the space $\mathbb{R}^l$ equipped with the Euclidean norm $|\cdot|$, and 
$F(\cdot) = (f_1(\cdot), \ldots, f_l(\cdot))^T$, where the functions $f_j \colon X \times P \to \mathbb{R}$ are
quasidifferentiable in $x$ at $(\overline{x}, \overline{p})$. Then the mapping $F(\cdot, \overline{p})$ is scalarly
quasidifferentiable at $\overline{x}$. Moreover, q.d.-MFCQ holds at $(\overline{x}, \overline{p})$ iff the sets 
$[\mathscr{D}_x f_j(\overline{x}, \overline{p})]^+$, $1 \le j \le l$, are linearly independent, and there exists 
$\overline{h} \in X$ such that $\langle v^*, \overline{h} \rangle = 0$ 
for all $v^* \in [\mathscr{D}_x f_j(\overline{x}, \overline{p})]^+$ and $1 \le j \le l$,
while $\langle v^*, \overline{h} \rangle < 0$ for all $v^* \in [\mathscr{D}_x g_i(\overline{x}, \overline{p})]^+$ and 
$i \in I(\overline{x}, \overline{p})$.
\end{proposition}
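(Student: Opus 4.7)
The plan is to compute the scalar quasidifferential of $F$ explicitly via standard quasidifferential calculus, then reformulate both clauses of q.d.-MFCQ in terms of the sets $A_j := [\mathscr{D}_x f_j(\overline{x}, \overline{p})]^+$. The only nontrivial step is passing from pointwise linear independence of the $A_j$'s to the uniform lower bound in \eqref{HiddenLinearIndependence}, which is where finite-dimensionality of $Y$ will be crucial.

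First, for any $y^* = (y_1^*, \ldots, y_l^*) \in Y^* = \mathbb{R}^l$ we have $\langle y^*, F'(\overline{x}, h)\rangle = \sum_{j=1}^l y_j^* f_j'(\overline{x}, h)$, and each summand is the difference of continuous sublinear functions, hence so is the sum; this establishes scalar quasidifferentiability. Combining the standard sum rule (adding sub- and superdifferentials separately) with the scalar multiplication rule (which swaps sub- and superdifferentials when the scalar is negative), one obtains
\begin{equation*}
  [\mathscr{D}_x F(\overline{x}, \overline{p}; y^*)]^+ = \sum_{j=1}^l y_j^* A_j,
\end{equation*}
valid for every $y^* \in \mathbb{R}^l$; the sub/super swap for negative $y_j^*$ is absorbed once one sums sub- and superdifferentials, so the product $y_j^* A_j$ appears directly.

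With this identity in hand, the directional part of q.d.-MFCQ becomes transparent. Specializing $y^* = e_k$ in the condition ``$\langle v^*, \overline{h}\rangle = 0$ for all $v^* \in [\mathscr{D}_x F(\overline{x}, \overline{p}; y^*)]^+$ and all $y^* \in Y^*$'' yields $\langle v^*, \overline{h}\rangle = 0$ for all $v^* \in A_k$ and $1 \le k \le l$, and the converse is immediate by linearity. The inequality part on the $g_i$'s is unchanged.

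It remains to identify \eqref{HiddenLinearIndependence} with linear independence of $A_1, \ldots, A_l$. Setting $\phi(y^*) := d(\mathbb{O}, \sum_{j=1}^l y_j^* A_j)$, the direction ``\eqref{HiddenLinearIndependence} $\Rightarrow$ linear independence'' follows immediately by rescaling any nontrivial dependence relation to $S_{Y^*}$. For the converse, each $A_j$ is norm bounded by the Banach-Steinhaus theorem (as a weak$^*$-compact subset of $X^*$), which yields Lipschitz continuity of $\phi$ via an estimate of the form $|\phi(y^*) - \phi(z^*)| \le M\sqrt{l}\,|y^* - z^*|$, where $M$ bounds the norms of the $A_j$. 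Linear independence gives $\phi > 0$ pointwise on $S_{Y^*}$, and compactness of $S_{Y^*}$ (afforded by $Y = \mathbb{R}^l$) then promotes pointwise positivity to the uniform lower bound required by \eqref{HiddenLinearIndependence}. This last implication is the main obstacle, and it is precisely where finite-dimensionality of $Y$ is used essentially.
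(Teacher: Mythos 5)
Your proposal is correct and follows essentially the same route as the paper: both hinge on the identity $[\mathscr{D}_x F(\overline{x}, \overline{p}; y^*)]^+ = \sum_{j=1}^l y_j [\mathscr{D}_x f_j(\overline{x}, \overline{p})]^+$ (with the sign swap absorbed in the quasidifferential sum), reduce the directional clause to it, and obtain the uniform bound in \eqref{HiddenLinearIndependence} from pointwise positivity via continuity of $y^* \mapsto d(\mathbb{O}, \sum_j y_j^* A_j)$ and compactness of $S_{Y^*}$, your Lipschitz estimate merely making explicit what the paper calls obvious. The only step worth spelling out is that pointwise positivity of $\phi$ on $S_{Y^*}$ uses weak$^*$ compactness of $\sum_j y_j^* A_j$ (the norm is weak$^*$ lower semicontinuous, so the distance is attained), which is exactly where the paper invokes separation.
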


\begin{proof}
From the fact that the functions $f_j(\cdot, \overline{p})$ are quasidifferentiable at $\overline{x}$ it
follows that the mapping $F(\cdot, \overline{p})$ is directionally differentiable at this point, and
$$
  [F(\cdot, \overline{p})]'(\overline{x}, h) = \Big( [f_1(\cdot, \overline{p})]'(\overline{x}, h), \ldots,
  [f_l(\cdot, \overline{p})]'(\overline{x}, h) \Big)^T
$$
for any $h \in X$. Therefore, for any $y^* = (y_1, \ldots, y_l)^T \in \mathbb{R}^l$ one has
$$
  \langle y^*, [F(\cdot, \overline{p})]'(\overline{x}, h) \rangle
  = \sum_{j = 1}^l y_j 
  \Big( \max_{v^* \in \underline{\partial}_x f_j(\overline{x}, \overline{p})} \langle v^*, h \rangle
  + \min_{w^* \in \overline{\partial}_x f_j(\overline{x}, \overline{p})} \langle w^*, h \rangle \Big),
$$
which implies that $F(\cdot, \overline{p})$ is scalarly quasidifferentiable at $\overline{x}$, and for any $y^*$ one
can define
\begin{gather*}
  \underline{\partial}_x F(\overline{x}, \overline{p}; y^*)
  = \sum_{j = 1}^l \Big( [y_j]_+ \underline{\partial}_x f_j(\overline{x}, \overline{p})
  - [-y_j]_+ \overline{\partial}_x f_j(\overline{x}, \overline{p}) \Big), \\
  \overline{\partial}_x F(\overline{x}, \overline{p}; y^*) 
  = \sum_{j = 1}^l \Big( [y_j]_+ \overline{\partial}_x f_j(\overline{x}, \overline{p})
  - [-y_j]_+ \underline{\partial}_x f_j(\overline{x}, \overline{p}) \Big),
\end{gather*}
where $[t]_+ = \max\{ t, 0 \}$ for any $t \in \mathbb{R}$. Hence for any $y^*$ one has
\begin{equation} \label{SumQuasidiff}
  [\mathscr{D}_x F(\overline{x}, \overline{p}; y^*)]^+
  = \sum_{j = 1}^l y_j [\mathscr{D}_x f_j(\overline{x}, \overline{p})]^+.
\end{equation}
Consequently, if \eqref{HiddenLinearIndependence} holds true, then the sets 
$[\mathscr{D}_x f_j(\overline{x}, \overline{p})]^+$, $1 \le j \le l$, are linearly independent, since otherwise
$0 \in [\mathscr{D}_x F(\overline{x}, \overline{p}; y^*)]$ for $y^* = \lambda / |\lambda|$, where 
$\lambda \in \mathbb{R}^l$, $\lambda \ne \mathbf{0}_l$ is such that 
$\mathbb{O} \in \lambda_1 [\mathscr{D}_x f_1(\overline{x}, \overline{p})]^+ + \ldots
+ \lambda_l [\mathscr{D} f_l(\overline{x}, \overline{p})]^+$, which is impossible. Conversely, if the sets 
$[\mathscr{D}_x f_j(\overline{x}, \overline{p})]^+$, $1 \le j \le l$, are linearly independent, then
$\mathbb{O} \notin [\mathscr{D}_x F(\overline{x}, \overline{p}; y^*)]^+$ for any $y^* \ne \mathbf{0}_l$. Applying 
the separation theorem and the fact that the set $[\mathscr{D}_x F(\overline{x}, \overline{p}; y^*)]^+$ is weak${}^*$
compact one obtains that there exist $h \in X$ and $\delta > 0$ such that $\langle v^*, h \rangle \ge \delta$ for all 
$v^* \in [\mathscr{D}_x F(\overline{x}, \overline{p}; y^*)]^+$.
Therefore $\inf\{ \| v^* \| \mid v^* \in [\mathscr{D}_x F(\overline{x}, \overline{p}; y^*)]^+ \} > 0$ for any 
$y^* \ne \mathbf{0}_l$. Hence taking into account the facts that this infimum is obviously continuous with
respect to $y^*$ (see~\eqref{SumQuasidiff}), and the unit sphere in $\mathbb{R}^l$ is compact one gets that
\eqref{HiddenLinearIndependence} holds true. It remains to note that the equivalence between the second conditions from
q.d.-MFCQ and the proposition (the existence of $\overline{h}$) follows directly from \eqref{SumQuasidiff}.	 
\end{proof}

\begin{remark} \label{Remark_Equivalent_qdMFCQ}
With the use of the separation theorem one can easily check that under the assumptions of the proposition above the
vector $\overline{h}$ from q.d.-MFCQ exists iff 
\begin{equation} \label{qdMFCQ_geometric}
  \co\big\{ [\mathscr{D}_x g_i(\overline{x}, \overline{p})]^+ \mid i \in I(\overline{x}, \overline{p}) \big\} \cap 
  \cl\linhull\big\{ [\mathscr{D}_x f_j(\overline{x}, \overline{p})]^+ \mid 1 \le j \le l \big\} = \emptyset,
\end{equation}
where the closure is taken in the weak$^*$ topology. Furthermore, if $X$ is finite dimensional, then this span is
weak$^*$ closed, and \eqref{qdMFCQ_geometric} is equivalent to the following condition: for any 
$v^*_i \in [\mathscr{D}_x g_i(\overline{x}, \overline{p})]^+$, $i \in I(\overline{x}, \overline{p})$, and
$w_k^* \in \bigcup_{1 \le j \le l} [\mathscr{D}_x f_j(\overline{x}, \overline{p})]^+$, $1 \le k \le n$, where $n$ is the
dimension of $X$, there exists $\overline{h} \in X$ such that
\begin{equation} \label{qdMFCQ_pointwise}
  \langle v^*_i, \overline{h} \rangle < 0 \quad \forall i \in I(\overline{x}, \overline{p}), \quad
  \langle w^*_k, \overline{h} \rangle = 0 \quad \forall k \in \{ 1, \ldots, n \}.
\end{equation}
The implication $\eqref{qdMFCQ_geometric} \implies \eqref{qdMFCQ_pointwise}$ follows from the separation theorem, while
the opposite implication follows from the fact that if the intersection in \eqref{qdMFCQ_geometric} is not empty, then
it is impossible to find $\overline{h}$ satisfying \eqref{qdMFCQ_pointwise} for those $v^*_i$ and $w_k^*$ that
correspond to a vector from the intersection. Note that condition \eqref{qdMFCQ_pointwise} is, in a sense, a
``pointwise'' version of the second condition from q.d.-MFCQ. Let us finally point out that in the case when $l = 1$ the
``linear independence condition'' from q.d.-MFCQ is reduced to 
$\mathbb{O} \notin [\mathscr{D}_x f_1(\overline{x}, \overline{p})]^+$.
\end{remark}

Likewise the standard Mangasarian-Fromowitz constraint qualification, q.d.-MFCQ can be used to obtain sufficient
conditions for metric regularity. For the sake of simplicity we consider only the case when the functions $F$ and $g_i$
are continuous on $X \times P$, although the theorem below holds true under weaker assumptions. Note also that in 
the theorem below, unlike in the main results of \cite{Uderzo2}, we do not assume that the Banach space $Y$ admits a
Fr\'{e}chet smooth renorming.

\begin{theorem} \label{Thrm_ParamSystem_MetricReg}
Suppose that the functions $F$ and $g_i$, $i \in I$, are continuous. Let also 
a point $(\overline{x}, \overline{p}) \in X \times P$ be such that $\overline{x} \in \mathcal{S}(\overline{p})$, and
there exist a neighbourhood $U$ of $(\overline{x}, \overline{p})$ such that
\begin{enumerate}
\item{for any $(x, p) \in U$ the mapping $F(\cdot, p)$ is scalarly quasidifferentiable at $x$, and 
the functions $g_i(\cdot, p)$, $i \in I(\overline{x}, \overline{p})$, are quasidifferentiable at $x$;
}

\item{the multifunctions $\mathscr{D}_x g_i(\cdot)$, $i \in I(\overline{x}, \overline{p})$, are o.s.c. at 
$(\overline{x}, \overline{p})$, while the multifunction $(x, p) \mapsto [\mathscr{D}_x F(x, p; y^*)]^+$ is o.s.c. at
$(\overline{x}, \overline{p})$ uniformly with respect to $y^* \in S_{Y^*}$, i.e. for any $\varepsilon > 0$ there exists
$\delta > 0$ such that 
$[\mathscr{D}_x F(x, p; y^*)]^+ \subseteq [\mathscr{D}_x F(\overline{x}, \overline{p}; y^*)]^+ 
+ B(\mathbb{O}, \varepsilon)$ for 
all $y^* \in S_{Y^*}$ and $(x, p) \in B(\overline{x}, \delta) \times B(\overline{p}, \delta)$;
\label{Assumpt_OSC_QuasidiffMaps}
}

\item{the set $D(y) = \{ [\mathscr{D}_x F(\overline{x}, \overline{p}; y^*)]^+ 
\mid y^* \in S_{Y^*} \colon \langle y^*, y \rangle = \| y \| \}$ is weak${}^*$ closed and convex for any $y \in S_Y$.
\label{Assump_WeakStarClosedScalarQuasidiff}
}
\end{enumerate}
Suppose, finally, that q.d.-MFCQ holds at $(\overline{x}, \overline{p})$. Then there exist $K > 0$, a neighbourhood
$V$ of $(\overline{x}, \overline{p})$, and a neighbourhood $W$ of zero in $Y \times \mathbb{R}^m$ such that
\begin{equation} \label{MetricReg_SystemOfEq_Ineq}
  d( x, \mathcal{S}(p, y, z) ) 
  \le K \Big( \| F(x, p) - y \| + \sum_{i = 1}^m \max\{ g_i(x, p) - z_i, 0 \} \Big)
\end{equation}
for all $(x, p) \in V$ and $(y, z) \in W$. Therefore, in particular, the set-valued mapping 
$\Phi_p \colon X \rightrightarrows Y \times \mathbb{R}^m$, 
$\Phi_p(x) = \{ F(x, p) \} \times \prod_{i = 1}^m [g_i(x, p), + \infty)$ is metrically regular near the point
$(\overline{x}, (\mathbb{O}_Y, \mathbf{0}_m))$ with the norm of metric regularity not exceeding $K$ for all $p$ in a
neighbourhood of $\overline{p}$.
\end{theorem}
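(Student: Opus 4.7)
The plan is to apply the sufficient half of Theorem~\ref{Thrm_MetricRegularity_General} to the multifunction $\Phi_p$. Equipping $Y \times \mathbb{R}^m$ with the sum norm gives
\[
  \psi_{(y, z), p}(x) := d\big((y, z), \Phi_p(x)\big)
  = \|F(x, p) - y\|_Y + \sum_{i=1}^m \max\{g_i(x, p) - z_i, 0\},
\]
which is lower semicontinuous by continuity of $F$ and the $g_i$, and is quasidifferentiable in $x$ whenever $(y, z) \notin \Phi_p(x)$ by standard quasidifferential calculus (applied to the sublinear functions $\|\cdot\|_Y$ and $\max\{\cdot, 0\}$ composed with the scalarly quasidifferentiable $F(\cdot, p)$ and the quasidifferentiable $g_i(\cdot, p)$). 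Continuity also reduces the $g_i$-sum to indices in $I(\overline{x}, \overline{p})$ in a small neighborhood. By the remark after Theorem~\ref{Thrm_MetricRegularity_General} it suffices to produce a uniform $c > 0$ and a neighborhood of $(\overline{x}, \overline{p}, \mathbb{O}, \mathbf{0}_m)$ on which, for every $(x, p, y, z)$ in it with $(y, z) \notin \Phi_p(x)$, some unit $h \in X$ satisfies $\psi'_{(y, z), p}(x, h) \le -c$; the inequality \eqref{MetricReg_SystemOfEq_Ineq} will then drop out with $K = 1/c$.

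The required direction is built by fusing two pieces. The direction $\overline{h}$ from q.d.-MFCQ, in view of the inequality $\max_a \langle a, h\rangle + \min_b \langle b, h\rangle \le \max_{s \in \underline{\partial} + \overline{\partial}}\langle s, h\rangle$, already yields $g'_{i,x}(\overline{x}, \overline{p})[\overline{h}] \le -c_g$ for each $i \in I(\overline{x}, \overline{p})$ with some $c_g > 0$, while being ``tangent'' to the $F$-part in the sense $\langle s, \overline{h}\rangle = 0$ for every $s \in [\mathscr{D}_x F(\overline{x}, \overline{p}; y^*)]^+$ and every $y^* \in S_{Y^*}$. To handle $F$-violations I will use q.d.-MFCQ~(i), which gives $c_0 > 0$ with $[\mathscr{D}_x F(\overline{x}, \overline{p}; y^*)]^+ \cap B(\mathbb{O}, c_0) = \emptyset$ uniformly in $y^* \in S_{Y^*}$; for every $\tilde y \in S_Y$ the set $D(\tilde y)$ of assumption~\ref{Assump_WeakStarClosedScalarQuasidiff} is then weak$^*$ closed, convex, and disjoint from the weak$^*$ compact ball $B(\mathbb{O}, c_0)$, so Hahn-Banach strict separation supplies a unit vector $h_{\tilde y} \in X$ with $\langle s, h_{\tilde y}\rangle \ge c_F := c_0/2$ for every $s \in D(\tilde y)$, and $c_F$ is uniform in $\tilde y$. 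For a given $(x, p, y, z)$ in the neighborhood with $(y, z) \notin \Phi_p(x)$ I will then take $h = -h_{\tilde y} + t\overline{h}$ when $F(x, p) \ne y$, with $\tilde y = (F(x, p) - y)/\|F(x, p) - y\|$, and $h = \overline{h}$ otherwise, for a scalar $t > 0$ fixed as described below.

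Using the uniform outer semicontinuity of $(x, p) \mapsto [\mathscr{D}_x F(x, p; y^*)]^+$ in $y^*$ (assumption~\ref{Assumpt_OSC_QuasidiffMaps}) and of the $\mathscr{D}_x g_i$, together with the chain rule for $\|\cdot\|_Y$ and $\max\{\cdot, 0\}$ and the pointwise upper bound $\max_a + \min_b \le \max_{s \in [\mathscr{D}]^+}$, I expect to obtain $(\|F(\cdot, p) - y\|)'(x, h) \le -c_F + O(\varepsilon)$ and each active contribution $([g_i - z_i]_+)'(x, h) \le M - t c_g/2 + O(\varepsilon)$, where $M$ is a uniform norm-bound on $[\mathscr{D}_x g_i(\overline{x}, \overline{p})]^+$. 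Picking $t$ large enough to swamp $M$ and then shrinking the neighborhood until the $O(\varepsilon)$ errors are controlled by $c_F/4$ yields the uniform bound $\psi'_{(y, z), p}(x, h) \le -c_F/2$, and normalization (with $\|h\| \le 1 + t\|\overline{h}\|$) transfers this to a unit direction. The main obstacle is the joint tuning of $t$ and the size of the neighborhood: the tangency in q.d.-MFCQ is precisely what prevents the $t\overline{h}$-component from ruining the $F$-descent supplied by $-h_{\tilde y}$ (it contributes only an $O(t\varepsilon)$ perturbation under outer semicontinuity), while the uniform nondegeneracy in q.d.-MFCQ~(i) combined with assumption~\ref{Assump_WeakStarClosedScalarQuasidiff} is what makes the separation constant $c_F$ independent of the violation direction $\tilde y$.
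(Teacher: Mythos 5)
Your construction of the descent directions coincides with the paper's own argument in all essentials: the sum norm on $Y \times \mathbb{R}^m$, the reduction to the active indices by continuity, a unit vector $h_{\tilde y}$ obtained by weak$^*$ separation of the weak$^*$ closed convex set $D(\tilde y)$ from a ball of radius below the infimum in \eqref{HiddenLinearIndependence} (uniformly in $\tilde y \in S_Y$, thanks to assumption~\ref{Assump_WeakStarClosedScalarQuasidiff}), fused with a large multiple of $\overline{h}$ whose tangency to the $F$-part and strict descent on the $[\mathscr{D}_x g_i(\overline{x}, \overline{p})]^+$ are transferred to nearby $(x,p)$ by the (uniform) outer semicontinuity assumptions, and the case split according to whether $F(x,p) = y$ or not, ending with a normalized uniform negative bound on the directional derivative of the distance function. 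That analytic core is sound and is exactly what the paper does.

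The step that does not hold up in the stated generality is the reduction through Theorem~\ref{Thrm_MetricRegularity_General} and the remark following it: both presuppose that $\psi_{(y,z),p}$ is quasidifferentiable at the points in question, and you assert this ``by standard quasidifferential calculus''. For the inequality terms this is fine, but the equality term $\|F(\cdot,p)-y\|$ has directional derivative $h \mapsto \sup_{y^* \in \partial\|\cdot\|(F(x,p)-y)} \langle y^*, F_p'(x,h)\rangle$ (and $h \mapsto \|F_p'(x,h)\|$ when $F(x,p)=y$), i.e.\ a supremum of an infinite family of DC functions whenever the norm of $Y$ is not smooth at the residual; since $F$ is only \emph{scalarly} quasidifferentiable and $Y$ is an arbitrary Banach space (the theorem deliberately avoids any smooth renorming assumption), this need not be a difference of continuous sublinear functions, so quasidifferentiability of $\psi_{(y,z),p}$ is not available. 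A second, smaller, obstacle is that Theorem~\ref{Thrm_MetricRegularity_General} is stated for a fixed multifunction around a point of its graph, whereas $(\overline{x}, (\mathbb{O}_Y, \mathbf{0}_m))$ need not belong to $\Gr \Phi_p$ for $p \ne \overline{p}$. The repair is immediate and is precisely the paper's route: your unit direction with $\psi_{(y,z),p}'(x,h) \le -c$ already yields the strong-slope bound $|\nabla \psi_{(y,z),p}|(x) \ge c$ (this inequality needs only directional differentiability, not quasidifferentiability), and one then applies the slope criterion \cite[Theorem~2b]{Ioffe} directly, which requires only lower semicontinuity of the distance functions; combining it with the continuity of $F$ and $g_i$ to secure the localization inequality gives \eqref{MetricReg_SystemOfEq_Ineq} with constants uniform in $p$, as claimed.
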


\begin{proof}
Let $\overline{r} > 0$ be such that $B(\overline{x}, \overline{r}) \times B(\overline{p}, \overline{r}) \subset U$. Our
aim is to prove that there exist $r \in (0, \overline{r})$ and $K > 0$ such that for any $p \in B(\overline{p}, r)$ one
has $|\nabla \psi_{(y, z, p)}|(x) > K^{-1}$ for all $(y, z) \in B((\mathbb{O}_Y, \mathbf{0}_m), r)$ and 
$x \in B(\overline{x}, r)$ such that $(y, z) \notin \Phi_p(x)$, where 
$\psi_{(y, z, p)}(x) = d( (y, z), \Phi_p(x) )$, and the space $Y \times \mathbb{R}^m$ is equipped with 
the norm $\| (y, z) \| = \| y \| + \sum_{i = 1}^m |z_i|$. Then applying \cite[Theorem~2b]{Ioffe} one obtains that 
$d(x, \Phi_p^{-1}(y, z)) \le K d( (y, z), \Phi_p(x) )$ for all $x \in B(\overline{x}, r)$, $p \in B(\overline{p}, r)$,
and $(y, z) \in B((\mathbb{O}_Y, \mathbf{0}_m), r)$ such that $K d((y, z), \Phi_p(x)) < r - \| x - \overline{x} \|$,
i.e. \eqref{MetricReg_SystemOfEq_Ineq} holds true for all such $x$, $p$, $y$, and $z$. With the use of the continuity of
the functions $F$ and $g_i$ and the fact that $\overline{x} \in \mathcal{S}(\overline{p})$, i.e. 
$(\mathbb{O}_Y, \mathbf{0}_m) \in \Phi_{\overline{p}}(\overline{x})$, one can find $\delta < r$ such that 
$K d((y, z), \Phi_p(x)) < r - \| x - \overline{x} \|$ for all $x \in B(\overline{x}, \delta)$, 
$p \in B(\overline{p}, \delta)$ and $(y, z) \in B((\mathbb{O}_Y, \mathbf{0}_m), \delta)$, which implies that
\eqref{MetricReg_SystemOfEq_Ineq} holds true for all such $x$, $p$, $y$, and $z$, and the proof is complete.

Before we proceed to the proof of the inequality $|\nabla \psi_{(y, z, p)}|(x) > K^{-1}$, let us first compute 
the directional derivative of the mapping $\| F(\cdot, p) - y \|$. Denote $\omega(y) = \| y \|$. Recall that 
$\partial \omega(y) = \{ y^* \in S_{Y^*} \mid \| y \| = \langle y^*, y \rangle \}$ for any $y \ne \mathbb{O}$, where 
$\partial \omega(y)$ is the subdifferential of $\omega$ at $y$ in the sense of convex analysis. Fix $(x, p) \in U$ and
$y \in Y$. From the definition of scalar quasidifferentiability it follows that for any $h \in X$ one has
$$
  F_p(x + \alpha h) - F_p(x) = \alpha F_p'(x, h) + o(\alpha) \quad \forall \alpha \ge 0,
$$
where $\| o(\alpha) \| / \alpha \to 0$ as $\alpha \to + 0$ (recall that $F_p(x) = F(x, p)$). Hence
\begin{multline*}
  \Big| \| F_p(x + \alpha h) - y \| - \| F_p(x) - y \| - \alpha \omega'\big( F_p(x) - y, F_p'(x, h) \big) \Big| \\
  = \Big| \| F_p(x) - y + \alpha F_p'(x, h) + o(\alpha) \| - \| F_p(x) - y \| 
  - \alpha \omega'\big( F_p(x) - y, F_p'(x, h) \big) \Big| \\
  \le \Big| \| F_p(x) - y + \alpha F_p'(x, h) \| - \| F_p(x) - y \| 
  - \alpha \omega'\big( F_p(x) - y, F_p'(x, h) \big) \Big| + \| o(\alpha) \|.
\end{multline*}
Dividing this inequality by $\alpha$ and passing to the limit as $\alpha \to + 0$ one gets that the function 
$\| F_p(\cdot) - y \|$ is directionally differentiable at $x$, and for any $h \in X$ and $y \in Y$ one has
\begin{multline} \label{NormEqualConstr_DirectDeriv}
  \| F_p(\cdot) - y \|'(x, h) = \omega'(F_p(x) - y, F_p'(x, h))
  = \sup_{y^* \in \partial \omega(F_p(x) - y)} \langle y^*, F_p'(x, h) \rangle \\
  = \sup_{y^* \in \partial \omega(F_p(x) - y)} 
  \Big( \max_{v^* \in \underline{\partial}_x F(x, p; y^*)} \langle v^*, h \rangle 
  + \min_{w^* \in \overline{\partial}_x F(x, p; y^*)} \langle w^*, h \rangle \Big) \\
  \le \sup_{y^* \in \partial \omega(F_p(x) - y)} \max_{v^* \in [\mathscr{D}_x F(x, p; y^*)]^+} \langle v^*, h \rangle,
\end{multline}
if $F(x, p) \ne y$, while
\begin{equation} \label{NormEqualConstr_DirectDeriv_AtZero}
  \| F_p(\cdot) - y) \|'(x, h) = \| F_p'(x, h) \| \le 
  \sup_{y^* \in S_{Y^*}} \max_{v^* \in [\mathscr{D}_x F(x, p; y^*)]^+} \langle v^*, h \rangle,
\end{equation}
in the case $F(x, p) = y$, since $\| y \| = \sup_{y^* \in S_{Y^*}} \langle y^*, y \rangle$.

Now we can utilize q.d.-MFCQ and the outer semicontinuity of the quasidifferential mappings to prove the inequality
$|\nabla \psi_{(y, z, p)}|(x) > K^{-1}$. Let $\varkappa > 0$ be any number smaller than 
the infimum in \eqref{HiddenLinearIndependence}. From assumption \ref{Assump_WeakStarClosedScalarQuasidiff}, the fact
that the set $D(y)$ is convex, and the separation theorem it follows that for any $y \in S_Y$ there exists $h_y$ with
$\| h_y \| = 1$ such that $\langle v^*, h_y \rangle \le - \varkappa$ for all $v^* \in D(y)$. With the use of the second
condition in q.d.-MFCQ one obtains that $\langle v^*, h_y + t \overline{h} \rangle \le - \varkappa$ for all 
$v^* \in D(y)$ and $t \ge 0$, where the vector $\overline{h}$ is from q.d.-MFCQ. Hence applying the fact that 
the mapping $(x, p) \mapsto [\mathscr{D}_x F(x, p; y^*)]^+$ is o.s.c. at $(\overline{x}, \overline{p})$ uniformly with
respect to $y^* \in S_{Y^*}$,  one gets that for any $t \ge 0$ there exists $r_1(t) \in (0, \overline{r})$ such that for
any $y \in S_Y$ one has
\begin{equation} \label{EqualConstr_DescentDirection}
  \langle v^*, h_y + t \overline{h} \rangle \le - \frac{\varkappa}{2} \quad 
  \forall v^* \in [\mathscr{D}_x F(x, p; y^*)]^+ \quad \forall y^* \in \partial \| \cdot \|(y)
\end{equation}
for all $(x, p) \in B(\overline{x}, r_1(t)) \times B(\overline{p}, r_1(t))$.  Furthermore, from the second condition in
q.d.-MFCQ and assumption \ref{Assumpt_OSC_QuasidiffMaps} it follows that for any $t \ge 0$ there exists 
$r_2(t) \in (0, \overline{r})$ such that
\begin{equation} \label{EqualConstr_SlowAscentAtOrigin}
  \langle v^*, t \overline{h} \rangle \le \frac{\varkappa}{4} \quad
  \forall v^* \in [\mathscr{D}_x F(x, p; y^*)]^+ \quad \forall y^* \in S_{y^*}
\end{equation}
for all $(x, p) \in B(\overline{x}, r_2(t)) \times B(\overline{p}, r_2(t))$. 

Applying the second condition in q.d.-MFCQ, and the facts that $\| h_y \| = 1$ for any $y \in S_Y$ and the sets
$[\mathscr{D}_x g_i(\overline{x}, \overline{p})]^+$ are obviously weak${}^*$ compact (and thus bounded) one can find
$t_0 > 0$ such that $\langle v^*, h_y + t_0 \overline{h} \rangle \le - \varkappa$ for 
all $v^* \in [\mathscr{D}_x g_i(\overline{x}, \overline{p})]^+$, $i \in I(\overline{x}, \overline{p})$, 
and $y \in S_{Y}$. Hence with the use of the outer semicontinuity of the mappings
$\mathscr{D}_x g_i(\cdot)$ at $(\overline{x}, \overline{p})$ one obtains that there exists $r_3 \in (0, \overline{r})$
such that
\begin{equation} \label{InequalConstr_DescentDirection}
  \langle v^*, h_y + t_0 \overline{h} \rangle \le - \frac{\varkappa}{2} \quad
  \forall v^* \in [\mathscr{D}_x g_i(x, p)]^+ \: \forall i \in I(\overline{x}, \overline{p})
  \: \forall y \in S_{Y}.
\end{equation}
for all $(x, p) \in B(\overline{x}, r_3) \times B(\overline{p}, r_3)$. Finally, since $g_i$ are continuous, there exists
$r_4 \in (0, \overline{r})$ and $\varepsilon > 0$ such that $g_i(x, p) < - \varepsilon$ for any 
$(x, p) \in B(\overline{x}, r_4) \times B(\overline{p}, r_4)$ and $i \notin I(\overline{x}, \overline{p})$.

Define $r = \min\{ r_1(t_0), r_2(t_0), r_3, r_4, \varepsilon / 2 \}$, and fix any 
$(x, p) \in B(\overline{x}, r) \times B(\overline{p}, r)$ and $(y, z) \in B((\mathbb{O}_Y, \mathbf{0}_m), r)$ such that 
$(y, z) \notin \Phi_p(x)$. Note that $g_i(x, p) - z_i < 0$ for any $i \notin I(\overline{x}, \overline{p})$, since 
$r \le \min\{ r_4, \varepsilon / 2 \}$, which implies that $g_i(\cdot) - z_i < 0$ in a neighbourhood of $(x, p)$ for
any such $i$. Hence
$$
  d( (y, z), \Phi_{q}(\xi) ) = \| F(\xi, q) - y \| + 
  \sum_{i \in I(\overline{x}, \overline{p})} \max\{ g_i(\xi, q) - z_i, 0 \}
$$
for any $(\xi, q)$ in a neighbourhood of $(x, p)$, i.e. the indices $i \notin I(\overline{x}, \overline{p})$ can be
discarded from consideration. Observe also that
\begin{equation} \label{InequalConstr_DirectionalDeriv}
  \max\{ g_i(\cdot, p) - z_i, 0 \}'(x, h) = \begin{cases}
    [g_i(\cdot, p)]'(x, h), & \text{if } g_i(x, p) > z_i, \\
    \max\{ [g_i(\cdot, p)]'(x, h), 0 \}, & \text{if } g_i(x, p) = z_i, \\
    0, & \text{if } g_i(x, p) < z_i,
  \end{cases}
\end{equation}
and $[g_i(\cdot, p)]'(x, h) \le \max_{v^* \in [\mathscr{D}_x g_i(x, p)]^+} \langle v^*, h \rangle$ for any $h \in X$.

If $F(x, p) \ne y$, then with the use of \eqref{NormEqualConstr_DirectDeriv}, \eqref{EqualConstr_DescentDirection},
\eqref{InequalConstr_DescentDirection}, and \eqref{InequalConstr_DirectionalDeriv} one obtains that
$$
  \psi_{(y, z, p)}'(x, \eta) = \| F(\cdot, p) - y \|'(x, \eta) 
  + \sum_{i \in I(\overline{x}, \overline{p})} \max\{ g_i(\cdot, p) - z_i, 0 \}'(x, \eta)
  \le - \frac{\varkappa}{2}
$$
where $\eta = h_w + t_0 \overline{h}$ and $w = (F(x, p) - y) / \| F(x, p) - y \|$ (here we used the fact that 
$\partial \| \cdot \| (F(x, p) - y) = \partial \| \cdot \| (w)$). Note that $\| \eta \| \le 1 + t_0 \| \overline{h} \|$,
since $\| h_w \| = 1$. 

On the other hand, if $F(x, p) = y$,
then there exists $k \in I(\overline{x}, \overline{p})$ such that $g_k(x, p) > z_k$. Consequently, applying
\eqref{NormEqualConstr_DirectDeriv_AtZero}, \eqref{EqualConstr_SlowAscentAtOrigin},
\eqref{InequalConstr_DescentDirection}, and \eqref{InequalConstr_DirectionalDeriv} one gets that
\begin{align*}
  \psi_{(y, z, p)}'(x, \eta) &=
  \| F(\cdot, p) - y \|'(x, \eta) + \max\{ g_k(\cdot, p) - z_k, 0 \}'(x, \eta) \\
  &+ \sum_{i \in I(\overline{x}, \overline{p}) \setminus \{ k \}} \max\{ g_i(\cdot, p) - z_i, 0 \}'(x, \eta)
  \le \frac{\varkappa}{4} - \frac{\varkappa}{2} = - \frac{\varkappa}{4},
\end{align*}
where $\eta = t_0 \overline{h}$. Thus, for any $(x, p) \in B(\overline{x}, r) \times B(\overline{p}, r)$ and 
$(y, z) \in B((\mathbb{O}_Y, \mathbf{0}_m), r)$ such that $(y, z) \notin \Phi_p(x)$ one has
$$
  |\nabla \psi_{(y, z, p)}|(x) \ge - \psi_{(y, z, p)}'\left(x, \frac{\eta}{\| \eta \|} \right)
  \ge \frac{\varkappa}{4(1 + t_0 \| \overline{h} \|)},
$$
and the proof is complete.	 
\end{proof}

\begin{remark} \label{Remark_MatrixQuasidiff}
Let $F$ be as in Proposition~\ref{Prp_Equiv_qd_MFCQ} and $X = \mathbb{R}^n$. In this case one can
reformulate the sufficient conditions for the metric regularity of the mapping $F$ from
the theorem above in a different way. Namely, let the set $\underline{\partial}_x F(\overline{x}, \overline{p})$
consists of all $l \times n$ matrices whose $j$-th row is a vector 
from $\underline{\partial}_x f_j(\overline{x}, \overline{p})$. The set 
$\overline{\partial}_x F(\overline{x}, \overline{p})$ is defined in a similar way. Then the pair 
$\mathscr{D}_x F(\overline{x}, \overline{p}) 
= [\underline{\partial}_x F(\overline{x}, \overline{p}), \overline{\partial}_x F(\overline{x}, \overline{p})]$ is, in
fact, a quasi\-differential of the mapping $F(\cdot, \overline{p})$ at $\overline{x}$
(see~\cite[Appendix~III]{DemRub_book}). From Theorem~\ref{Thrm_ParamSystem_MetricReg} it follows that
for the mapping $F(\cdot, p)$ to be metrically regular near $(\overline{x}, F(\overline{x}, p))$ with the norm of metric
regularity not exceeding some $K > 0$ for all $p$ in a neighbourhood of $\overline{p}$ it is sufficient that $l \le n$,
and all matrices from the set $[\mathscr{D}_x F(\overline{x}, \overline{p})]^+ = 
\underline{\partial}_x F(\overline{x}, \overline{p}) + \overline{\partial}_x F(\overline{x}, \overline{p})$ have full
rank. Note that a similar condition on the set $[\mathscr{D}_x F(\overline{x}, \overline{p})]^+$ was introduced by
Demyanov in \cite{Demyanov_NewtMeth} for the analysis of nonsmooth implicit functions and a nonsmooth Newton method for
codifferentiable vector-valued functions.
\end{remark}

\begin{remark}
It should be noted that in the case when $X = \mathbb{R}^n$ and $Y = \mathbb{R}^l$, 
Theorem~\ref{Thrm_ParamSystem_MetricReg} is, in essence, reduced to the sufficient conditions for metric regularity in
terms of the Clarke subdifferential \cite{Auslender,Borwein}. Indeed, if a function  $f \colon X \to \mathbb{R}$ is
quasidifferentiable at a point $x$, then, as it easy to see,
$$
  \min_{v^* \in [\mathscr{D} f(x)]^+} \langle v^*, h \rangle \le f'(x, h) 
  \le \max_{v^* \in [\mathscr{D} f(x)]^+} \langle v^*, h \rangle
  \quad \forall h \in X,
$$
i.e. the quasidifferential sum $[\mathscr{D} f(x)]^+$ is a \textit{convexificator} of $f$ at $x$ (see
\cite{DemyanovJeyakumar,JeyakumarLuc,Demyanov2000}). With the use of the separation theorem and the inequalities above
one can easily check that if $f$ is G\^{a}teaux differentiable at $x$, then $f'(x) \in [\mathscr{D} f(x)]^+$ regardless
of the choice of quasidifferential. Consequently, if $X = \mathbb{R}^n$, $f$ is Lipschitz continuous and
quasidifferentiable near $x$, and a quasidifferential mapping $\mathscr{D} f$ is o.s.c. at $x$, then 
$\partial_{Cl} f(x) \subseteq [\mathscr{D} f(x)]^+$, where $\partial_{Cl} f(x)$ is the Clarke subdifferential of $f$ at
$x$ \cite{Clarke}. 

With the use of \cite[Corollary~2]{Dolgopolik_CodiffDescent} one can verify that under the assumptions of
Theorem~\ref{Thrm_ParamSystem_MetricReg} the functions $F(\cdot, p)$ and $g_i(\cdot, p)$ are Lipschitz continuous near
$\overline{x}$ with the same Lipschitz constant for all $p$ in a neighbourhood of $\overline{p}$, provided $F$ has the
same form as in Proposition~\ref{Prp_Equiv_qd_MFCQ}. Therefore, if $X = \mathbb{R}^n$, then 
$\partial_{Cl} g_i(\cdot, \overline{p})(\overline{x}) \subseteq [\mathscr{D}_x g_i(\overline{x}, \overline{p})]^+$, and
the same inclusion holds true for $f_j(x, p)$. Thus, if $X = \mathbb{R}^n$ and $Y = \mathbb{R}^l$, then
Theorem~\ref{Thrm_ParamSystem_MetricReg} is a corollary to the sufficient conditions for metric regularity in terms of
the Clarke subdifferential \cite[Theorem~1.1]{Auslender} (see also \cite{Borwein}). On the other hand, if either $X$ or
$Y$ is infinite dimensional, then Theorem~\ref{Thrm_ParamSystem_MetricReg} does not follow from the main results of
\cite{Auslender,Borwein}.

Let us also point out that Theorem~\ref{Thrm_ParamSystem_MetricReg} can be easily extended to the case when instead of
quasidifferential sums one uses o.s.c. convexificator mappings. However, since the Clarke subdifferential is 
the smallest o.s.c. convexificator mapping, in the finite dimensional case this result is a corollary to
\cite[Theorem~1.1]{Auslender} as well.
\end{remark}

Let us give an example illustrating Theorem~\ref{Thrm_ParamSystem_MetricReg} and Remark~\ref{Remark_MatrixQuasidiff}.

\begin{example}
Let $X = Y = \mathbb{R}^2$ and $P = \mathbb{R}$. Consider the following system of equations:
\begin{equation} \label{TwoDimSystem_Example}
  \begin{cases}
    \max\{ 2x_1, x_1 \} - | \sin(p x_2) | = y_1, \\
    \sin\big(p (x_1 + x_2) \big) + \min\{ x_2, 2 x_2 \} = y_2.
  \end{cases}
\end{equation}
Define $f_1(x, p) = \max\{ 3x_1, x_1 \} - | \sin(p x_2) |$ and 
$f_2(x, p) = \sin(p (x_1 + x_2)) + \min\{ x_2, 2 x_2 \}$.
Let us utilize Theorem~\ref{Thrm_ParamSystem_MetricReg} to find the values of the parameter $p$ for which the mapping 
$x \mapsto F(x, p) = (f_1(x, p), f_2(x, p))^T$ is metrically regular near the point $(\mathbf{0}_2, \mathbf{0}_2)$. 

The functions $f_1(x, p)$ and $f_2(x, p)$ are quasidifferentiable. With the use of basic rules of quasidifferential
calculus \cite[Section~III.2]{DemRub_book} one obtains that
\begin{align*}
  \underline{\partial}_x f_1(x, p) &= \begin{cases}
    \{ (2, 0)^T \}, & \text{if } x_1 > 0, \\
    \co\{ (1, 0)^T, (2, 0)^T \}, & \text{if } x_1 = 0, \\
    \{ (1, 0)^T \}, & \text{if } x_1 < 0,
  \end{cases}
  \\
  \overline{\partial}_x f_1(x, p) &= 
  \left\{ \begin{pmatrix} 0 \\ - p \cos(p x_2) \svsign\big( \sin(p x_2) \big) \end{pmatrix} \right\},
  \\
  \underline{\partial}_x f_2(x, p) &= \left\{ \begin{pmatrix} p \cos\big( p (x_1 + x_2) \big) \\ 
  p \cos\big( p (x_1 + x_2) \big) \end{pmatrix} \right\}, 
  \\
  \overline{\partial}_x f_2(x, p) &= \begin{cases}
    \{ (0, 1)^T \}, & \text{if } x_2 > 0, \\
    \co\{ (0, 1)^T, (0, 2)^T \}, & \text{if } x_2 = 0, \\
    \{ (0, 2) \}, & \text{if } x_2 < 0.
  \end{cases}
\end{align*}
It is readily seen that the quasidifferential mappings $(x, p) \mapsto \mathscr{D}_x f_1(x, p)$ and 
$(x, p) \mapsto \mathscr{D}_x f_2(x, p)$ are outer semicontinuous. 

Let us verify whether q.d.-MFCQ holds at the point $(\mathbf{0}_2, p)$. Following
Remark~\ref{Remark_MatrixQuasidiff} introduce the quasidifferential 
$\mathscr{D}_x F(\mathbf{0}_2, p) = [ \underline{\partial}_x F(\mathbf{0}_2, p), 
\overline{\partial}_x F(\mathbf{0}_2, p)]$,
\begin{align*}
  \underline{\partial}_x F(\mathbf{0}_2, p) &= \left\{ \begin{pmatrix} t & 0 \\ p & p \end{pmatrix} \biggm| 
  t \in [1, 2] \right\}, \\
  \overline{\partial}_x F(\mathbf{0}_2, p) &= \left\{ \begin{pmatrix} 0 & p t \\ 0 & s \end{pmatrix} \biggm| 
  t \in [-1, 1], s \in [1, 2] \right\},
\end{align*}
of the map $x \mapsto F(x, p)$ at the point $x = \mathbf{0}_2$. The first row of the set 
$\underline{\partial}_x F(\mathbf{0}_2, p)$ corresponds to $\underline{\partial}_x f_1(\mathbf{0}_2, p)$, while 
the second row corresponds to $\underline{\partial}_x f_2(\mathbf{0}_2, p)$. 
The set $\overline{\partial}_x F(\mathbf{0}_2, p)$ is defined in the same way.

The quasidifferential sum of the map $x \mapsto F(x, p)$ at $x = \mathbf{0}_2$ has the form
$$
  [\mathscr{D}_x F(\mathbf{0}_2, p)]^+ = \left\{ \begin{pmatrix} t & ps \\ p & p + r \end{pmatrix}
  \biggm| t \in [1, 2], \: s \in [-1, 1], \: r \in [1, 2] \right\}
$$
Our aim is to find such $p \in \mathbb{R}$ that all matrices from the set $[\mathscr{D}_x F(\mathbf{0}_2, p)]^+$ are
nondegenerate. The determinants of the matrices from $[\mathscr{D}_x F(\mathbf{0}_2, p)]^+$ take values in the set 
$$
  \co\{ 1, 4 \} + \co\{ p, 2p \} + \co\{ -p^2, p^2 \}.
$$
Hence taking into accoun the fact that the determinant of 
$\left(\begin{smallmatrix} 1 & -p \\ p & p + 1  \end{smallmatrix}\right) \in [\mathscr{D}_x F(\mathbf{0}_2, p)]^+$ is
equal to $p^2 + p + 1$ and positive for all $p$ one obtains that $\determ A \ne 0$ for any 
$A \in [\mathscr{D}_x F(\mathbf{0}_2, p)]^+$ iff the following inequalities hold true:
$$
  p^2 + 2p + 1 > 0, \quad - p^2 + p + 1 > 0, \quad - p^2 + 2p + 1 > 0.
$$
Solving these inequalities one obtains that q.d.-MFCQ holds at the point $(\mathbf{0}_2, p)$ iff
$p \in (1 - \sqrt{2}, (1 + \sqrt{5}) / 2)$. Consequently, by Theorem~\ref{Thrm_ParamSystem_MetricReg} one can conclude
that for any $\overline{p} \in (1 - \sqrt{2}, (1 + \sqrt{5}) / 2)$ there exist $K > 0$ and $r > 0$ such that
$$
  d\big( x, (F_p)^{-1}(y) \big) \le K \| y - F(x, p) \|
$$
for all $x, y \in B(\mathbf{0}_2, r)$ and any $p \in (\overline{p} - r, \overline{p} + r)$, which in particular implies
that for any such $y$ and $p$ there exists a solution $x(y, p)$ of system \eqref{TwoDimSystem_Example}.
\end{example}

As the following simple example shows q.d.-MFCQ, unlike MFCQ in the smooth case, is not \textit{necessary} for 
the metric regularity of a multifunction associated with a system of quasidifferentiable equality and inequality
constraints.

\begin{example} \label{Example_qdMFCQ_fails}
Let $X = \mathbb{R}^2$, $Y = \mathbb{R}$, $F(x) = |x_1| - |x_2|$, and suppose that there are no inequality constraints.
Let us check whether q.d.-MFCQ holds at the point $\overline{x} = \mathbf{0}_2$. Indeed, the function $F$ is
quasidifferentiable, and one can define
$$
  \underline{\partial} F(x) = \left\{ \begin{pmatrix} \svsign(x_1) \\ 0 \end{pmatrix} \right\}, \quad
  \overline{\partial} F(x) = \left\{ \begin{pmatrix} 0 \\ -\svsign(x_2) \end{pmatrix} \right\}.
$$
Clearly, the multifunctions $\underline{\partial} F(\cdot)$ and $\overline{\partial} F(\cdot)$ are outer
semicontinuous. Observe that 
$[\mathscr{D} F(\overline{x})]^+ = \{ x \in \mathbb{R}^2 \mid \max\{ |x_1|, |x_2| \} \le 1 \}$, and q.d.-MFCQ is not
satisfied at the origin, since $\mathbf{0}_2 \in [\mathscr{D} F(\overline{x})]^+$, despite the fact that the function
$F$ is metrically regular near the point $(\overline{x}, 0)$ (see Remark~\ref{Remark_ComparisonWithUderzo}).
\end{example}

It should be noted that in the finite dimensional case q.d.-MFCQ imposes some implicit assumptions on the dimension of
the space $X$. For example, if for the system
$$
  f_1(x, p) = y, \quad g_1(x, p) \le 0
$$
the quasidifferential sum $[\mathscr{D}_x f_1(\overline{x}, \overline{p})]^+$ contains at least two linearly independent
vectors, then $\dimension(\linhull [\mathscr{D}_x f_1(\overline{x}, \overline{p})]^+) \ge 2$ and for q.d.-MFCQ to
hold true at $(\overline{x}, \overline{p})$ it is necessary that $\dimension X \ge 3$ 
(see~Remark~\ref{Remark_Equivalent_qdMFCQ}). The following example highlights this drawback of q.d.-MFCQ.

\begin{example}
Let $X = \mathbb{R}^2$, $Y = \mathbb{R}$, and $m = 1$. Consider the following system:
$$
  f(x) = |x_1| - x_2 = y, \quad g(x) = x_1 \le z.
$$
Our aim is to check whether the multifunction $\Phi(x) = \{ f(x) \} \times [g(x), + \infty)$ associated with this system
is metrically regular near the point $(\overline{x}, (0, 0))$ with $\overline{x} = \mathbf{0}_2$. 

Both functions $f$ and $g$ are obviously quasidifferentiable. One can define
\begin{align*}
  \underline{\partial} f(x) &= \left\{ \begin{pmatrix} \svsign(x_1) \\ -1 \end{pmatrix} \right\}, \quad
  \overline{\partial} f(x) = \{ \mathbf{0}_2 \}, \\
  \underline{\partial} g(x) &= \left\{ \begin{pmatrix} 1 \\ 0 \end{pmatrix} \right\}, \qquad
  \overline{\partial} g(x) = \{ \mathbf{0}_2 \}.
\end{align*}
Clearly, the mappings $\mathscr{D} f(\cdot)$ and $\mathscr{D} g(\cdot)$ are outer semicontinuous. Observe that
$$
  [\mathscr{D} f(\overline{x})]^+ = \co\left\{ \begin{pmatrix} 1 \\ -1 \end{pmatrix}, 
  \begin{pmatrix} -1 \\ -1 \end{pmatrix} \right\}, \quad
  [\mathscr{D} g(\overline{x})]^+ = \left\{ \begin{pmatrix} 1 \\ 0 \end{pmatrix} \right\}.
$$
Hence $\linhull [\mathscr{D} f(x)]^+ = \mathbb{R}^2$, which implies that q.d.-MFCQ does not hold at $\overline{x}$, and
Theorem~\ref{Thrm_ParamSystem_MetricReg} cannot be applied. Therefore we utilize
Theorem~\ref{Thrm_MetricRegularity_General} to check whether the multifunction $\Phi$ is metrically regular
near the point $(\overline{x}, (0, 0))$.

Note that
$$
  \psi_{(y, z)}(x) = d( (y, z), \Phi(x) ) = | y - |x_1| + x_2 | + \max\{ 0, x_1 - z \}.
$$
Define $\psi^1_y(x) = |y - |x_1| + x_2|$ and $\psi^2_z(x) = \max\{ 0, x_1 - z \}$. The functions $\psi_{(y, z)}(\cdot)$,
$\psi^1_y(\cdot)$ and $\psi^2_z(\cdot)$ are quasidifferentiable for all $y, z \in \mathbb{R}$. Applying basic rules of
quasidifferential calculus \cite[Section~III.2]{DemRub_book} one obtains that
\begin{align*}
  \underline{\partial} \psi^1_y(x) &= \{ \mathbf{0}_2 \},
  \quad
  \overline{\partial} \psi^1_y(x) = \left\{ \begin{pmatrix} - \svsign(x_1) \\ 1 \end{pmatrix} \right\},
  \quad \text{if } y > f(x), \\
  \underline{\partial} \psi^1_y(x) &= \co\left\{ \begin{pmatrix} 0 \\ 0 \end{pmatrix}, 
  \begin{pmatrix} 2 \svsign(x_1) \\ -2 \end{pmatrix} \right\},
  \:
  \overline{\partial} \psi^1_y(x) = \left\{ \begin{pmatrix} - \svsign(x_1) \\ 1 \end{pmatrix} \right\},
  \: \text{if } y = f(x), \\
  \underline{\partial} \psi^1_y(x) &= \left\{ \begin{pmatrix} \svsign(x_1) \\ -1 \end{pmatrix} \right\},
  \quad
  \overline{\partial} \psi^1_y(x) = \{ \mathbf{0}_2 \},
  \quad \text{if } y < f(x), \\
  \underline{\partial} \psi^2_z(x) &= \begin{cases}
    \{ \mathbf{0}_2 \}, & \text{if } x_1 < z, \\
    \co\{ (0, 0)^T, (1, 0)^T \}, & \text{if } x_1 = z, \\
    \{ (1, 0)^T \}, & \text{if } x_1 > z,
  \end{cases}
  \quad
  \overline{\partial} \psi^2_z(x) = \{ \mathbf{0}_2 \}
\end{align*}
Moreover, 
$\underline{\partial} \psi_{(y, z)}(x) = \underline{\partial} \psi^1_y(x) + \underline{\partial} \psi^2_z(x)$ and
$\overline{\partial} \psi_{(y, z)}(x) = \overline{\partial} \psi^1_y(x) + \overline{\partial} \psi^2_z(x)$.

Fix any $x \in \mathbb{R}^2$ and $y, z \in \mathbb{R}$ such that $(y, z) \notin \Phi(x)$, and suppose that the space 
$X$ is equipped with the Euclidean norm. The following three cases are possible.
\begin{enumerate}
\item{If $y > f(x)$, then for any $t \in \svsign(x_1)$ one has 
$w^* = (-t, 1)^T \in \overline{\partial} \psi_{(y, z)} (x)$ and 
$d(\mathbf{0}_2, \underline{\partial} \psi_{(y, z)}(x) + w^*) \ge 1$, since any 
$v^* \in \underline{\partial} \psi_{(y, z)}(x) + w^*$ has the form $(s, 1)^T$ for some $s \in \mathbb{R}$.
}

\item{If $y < f(x)$, then for $w^* = \mathbf{0}_2 \in \overline{\partial} \psi_{(y, z)}(x)$ one has
$d(\mathbf{0}_2, \underline{\partial} \psi_{(y, z)}(x) + w^*) \ge 1$, since any 
$v^* \in \underline{\partial} \psi_{(y, z)}(x) + w^*$ has the form $(s, -1)^T$ for some $s \in \mathbb{R}$.
}

\item{If $y = f(x)$, then $x_1 > z$ due to the fact that $(y, z) \notin \Phi(x)$. Define
$w^* = (-\sign(x_1), 1) \in \overline{\partial} \psi_{(y, z)}(x)$, if $x_1 \ne 0$, and
$w^* = (1, 1) \in \overline{\partial} \psi_{(y, z)}(x)$, if $x_1 = 0$. Then one can verify that
$d(\mathbf{0}_2, \underline{\partial} \psi_{(y, z)}(x) + w^*) = \sqrt{2} / 2$.
}
\end{enumerate}
Thus, for any $x \in \mathbb{R}^2$ and $y, z \in \mathbb{R}$, $(y, z) \notin \Phi(x)$, there exists 
$w^* \in \overline{\partial} \psi_{(y, z)} (x)$ such that 
$d(\mathbf{0}_2, \underline{\partial} \psi_{(y, z)}(x) + w^*) \ge \sqrt{2} / 2$. Therefore, the multifunction $\Phi$ is
metrically regular near the point $(\overline{x}, (0, 0))$ with the norm of metric regularity not exceeding 
$\sqrt{2} / 2$ by Theorem~\ref{Thrm_MetricRegularity_General}.
\end{example}

\section{Optimality Conditions}
\label{Section_OptimalityConditions}

Let us utilize q.d.-MFCQ as a new constraint qualification for quasidifferential programming problems with equality and
inequality constraints to obtain necessary optimality conditions for these problems. To this end, consider the following
optimization problem:
$$
  \min \: u(x) \quad 
  \text{subject to } f_j(x) = 0, \quad j \in J, \quad g_i(x) \le 0, \quad i \in I.
  \eqno{(\mathcal{P})}
$$
Here $u, f_j, g_i \colon X \to \mathbb{R}$ are given functions, $J = \{ 1, \ldots, l \}$, and $I = \{ 1, \ldots, m \}$.
Our aim is to obtain optimality conditions for the problem $(\mathcal{P})$ via exact penalty function approach. 

Define $\varphi(x) = \sum_{j = 1}^l |f_j(x)| + \sum_{i = 1}^m \max\{ g_i(x), 0 \}$, and denote the $\ell_1$ penalty
function for the problem $(\mathcal{P})$ by $\Psi_c(x) = u(x) + c \varphi(x)$, where $c \ge 0$ is the penalty parameter.
Note that if the functions $u$, $f_j$, and $g_i$ are quasidifferentiable, then this penalty function is
quasidifferentiable as well (see \cite{DemRub_book}). 

Let $\Omega$ be the feasible region of the problem $(\mathcal{P})$, and $\overline{x}$ be a locally optimal solution of
this problem. Observe that $x \in \Omega$ iff $\varphi(x) = 0$. Recall also that if $u$ is Lipschitz continuous near
$\overline{x}$, and the penalty term $\varphi$ has a \textit{local error bound} at $\overline{x}$, i.e. there exists
$\tau > 0$ such that $\varphi(x) \ge \tau d(x, \Omega)$ for any $x$ in a neighbourhood of $\overline{x}$, then 
the penalty function $\Psi_c$ is \textit{locally exact} at $\overline{x}$, i.e. there exist a neighbourhood $U$
of $\overline{x}$ and $c^* \ge 0$ such that 
$$
  \Psi_c(x) \ge \Psi_c(\overline{x}) \quad \forall x \in U \quad \forall c \ge c^*,
$$
(see, e.g. \cite[Theorem~2.4 and Proposition~2.7]{Dolgopolik_ExPen}). If $\Psi_c$ is locally exact at $\overline{x}$,
then by definition $\overline{x}$ is a point of unconstrained local minimum of $\Psi_c$ for any sufficiently large 
$c \ge 0$. In this case one can apply standard necessary conditions for a minimum in terms of quasidifferentials
\cite{DemRub_book} to $\Psi_c$ to obtain necessary optimality conditions for the problem $(\mathcal{P})$.

\begin{theorem} \label{Theorem_QuasidiffProg_OptimCond}
Let the following assumptions be valid:
\begin{enumerate}
\item{$\overline{x}$ is a locally optimal solution of the problem $(\mathcal{P})$;}

\item{$u$ is quasidifferentiable at $\overline{x}$ and Lipschitz continuous near this point;}

\item{$f_j$, $j \in J$, and $g_i$, $i \in I$, are quasidifferentiable in a neighbourhood of $\overline{x}$, and there
exist quasidifferential mappings $\mathscr{D} f_j(\cdot)$, $j \in J$, and $\mathscr{D} g_i(\cdot)$, $i \in I$, defined
in a neighbourhood of $\overline{x}$ and o.s.c. at this point;}

\item{q.d.-MFCQ holds at $\overline{x}$.}
\end{enumerate}
Then there exists $c^* \ge 0$ such that for any $c \ge c^*$ one has
\begin{equation} \label{QuasidiffProg_OptCond_via_ExPenFunc}
  \mathbb{O} \in \underline{\partial} \Psi_c(\overline{x}) + w^*
  \quad \forall w^* \in \overline{\partial} \Psi_c(\overline{x}),
\end{equation}
where $\mathscr{D} \Psi_c(\overline{x}) = [ \underline{\partial} \Psi_c(\overline{x}), 
\overline{\partial} \Psi_c(\overline{x})]$ is any quasidifferential of $\Psi_c$ at $\overline{x}$. 
Moreover, for any $w_0^* \in \overline{\partial} u(\overline{x})$, 
$v_j^* \in \underline{\partial} f_j(\overline{x})$,
$w_j^*  \in \overline{\partial} f_j(\overline{x})$, $j \in J$, and 
$z_i^* \in \overline{\partial} g_i(\overline{x})$, $i \in I$, there exist
$\underline{\mu}_j, \overline{\mu}_j, \lambda_i \ge 0$ such that $\lambda_i g_i(\overline{x}) = 0$ for all $i \in I$ and
\begin{equation} \label{QuasidiffProg_LagrangeMultipliers}
\begin{split}
  \mathbb{O} \in \underline{\partial} u(\overline{x}) + w_0^* 
  &- \sum_{j = 1}^l \underline{\mu}_j \Big( v_j^* + \overline{\partial} f_j(\overline{x}) \Big) \\
  &+ \sum_{j = 1}^l \overline{\mu}_j \Big( \underline{\partial} f_j(\overline{x}) + w_j^* \Big)
  + \sum_{i = 1}^m \lambda_i \Big( \underline{\partial} g_i(\overline{x}) + z_i^* \Big).
\end{split}
\end{equation}
In addition, one can choose $\underline{\mu}_j, \overline{\mu}_j$, and $\lambda_i$ in such a way that for all $i \in I$
and $j \in J$ one has $\max\{ \underline{\mu}_j + \overline{\mu}_j, \lambda_i \} \le c^*$ , i.e.
the multipliers $\underline{\mu}_j, \overline{\mu}_j$, and $\lambda_i$ are bounded for all 
$w_0^* \in \overline{\partial} u(\overline{x})$,
$v_j^* \in \underline{\partial} f_j(\overline{x})$,
$w_j^*  \in \overline{\partial} f_j(\overline{x})$, $j \in J$, and 
$z_i^* \in \overline{\partial} g_i(\overline{x})$, $i \in I$.
\end{theorem}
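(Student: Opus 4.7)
The plan is to reduce the constrained problem $(\mathcal{P})$ to an unconstrained minimization of the $\ell_1$ penalty function $\Psi_c$ via an exact penalty argument, and then apply the standard quasidifferential necessary optimality condition for unconstrained minima. The bridge between the two is Theorem~\ref{Thrm_ParamSystem_MetricReg}, which supplies the local error bound that makes $\Psi_c$ locally exact.

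First I would apply Theorem~\ref{Thrm_ParamSystem_MetricReg} to the (parameter-free) system $F(x) = (f_1(x), \ldots, f_l(x))^T = \mathbb{O}$, $g_i(x) \le 0$, taking $P$ to be a singleton and $Y = \mathbb{R}^l$ equipped with, say, the $\ell_1$ norm. Assumption~3 of the present theorem supplies the outer semicontinuous quasidifferential mappings required by that theorem; assumption~\ref{Assump_WeakStarClosedScalarQuasidiff} there is automatic in finite dimensions (in infinite dimensions it is built into the standing hypotheses via Proposition~\ref{Prp_Equiv_qd_MFCQ} or a direct check with the scalar quasidifferential constructed from the $f_j$); q.d.-MFCQ is assumption~4 here. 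Thus there exist $K > 0$ and a neighbourhood $U$ of $\overline{x}$ such that
\[
  d(x, \Omega) \;\le\; K\Big( \| F(x) \| + \sum_{i=1}^m \max\{ g_i(x), 0 \} \Big)
  \;=\; K \varphi(x) \qquad \forall x \in U,
\]
that is, $\varphi$ has a local error bound at $\overline{x}$. Combined with the Lipschitz continuity of $u$ and \cite[Theorem~2.4 and Proposition~2.7]{Dolgopolik_ExPen}, this yields the existence of $c^* \ge 0$ (in fact one can take any $c^* \ge K \cdot \mathrm{Lip}(u)$) for which $\Psi_c$ is locally exact at $\overline{x}$ for every $c \ge c^*$; hence $\overline{x}$ is a point of unconstrained local minimum of $\Psi_c$.

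Since $\Psi_c$ is quasidifferentiable at $\overline{x}$ (as a finite sum/composition of quasidifferentiable functions), the classical Demyanov--Polyakova necessary condition gives exactly \eqref{QuasidiffProg_OptCond_via_ExPenFunc}. To pass from \eqref{QuasidiffProg_OptCond_via_ExPenFunc} to the Lagrange-type form \eqref{QuasidiffProg_LagrangeMultipliers}, I would explicitly assemble a quasidifferential of $\Psi_c$ at $\overline{x}$ from the prescribed quasidifferentials of $u$, $f_j$, $g_i$. Using the sum and max rules from \cite[Section~III.2]{DemRub_book}, together with the fact that $f_j(\overline{x}) = 0$ for every $j$ and $g_i(\overline{x}) \le 0$ for every $i$, one obtains
\begin{align*}
  \overline{\partial}\Psi_c(\overline{x}) &= \overline{\partial}u(\overline{x})
  \;+\; c \sum_{j \in J}\bigl(\overline{\partial}f_j(\overline{x}) - \underline{\partial}f_j(\overline{x})\bigr)
  \;+\; c \sum_{i \in I(\overline{x})} \overline{\partial}g_i(\overline{x}), \\
  \underline{\partial}\Psi_c(\overline{x}) &= \underline{\partial}u(\overline{x})
  \;+\; c \sum_{j \in J} \co\bigl\{\,2\underline{\partial}f_j(\overline{x}),\; -2\overline{\partial}f_j(\overline{x})\,\bigr\}
  \;+\; c \sum_{i \in I(\overline{x})} \co\bigl\{\underline{\partial}g_i(\overline{x}),\,\{\mathbb{O}\}\bigr\},
\end{align*}
up to the standard sign conventions (indices $i \notin I(\overline{x})$ contribute zero as $g_i(\overline{x}) < 0$). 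Now fix arbitrary $w_0^* \in \overline{\partial}u(\overline{x})$, $v_j^* \in \underline{\partial}f_j(\overline{x})$, $w_j^* \in \overline{\partial}f_j(\overline{x})$, and $z_i^* \in \overline{\partial}g_i(\overline{x})$. Choose in $\overline{\partial}\Psi_c(\overline{x})$ the element obtained by selecting $w_0^*$ and $w_j^* - v_j^*$ and $z_i^*$ (for $i \in I(\overline{x})$) in the respective summands; substituting into \eqref{QuasidiffProg_OptCond_via_ExPenFunc} and unfolding each $\co\{\cdot,\cdot\}$ as a convex combination with coefficients $\alpha_j, \beta_i \in [0,1]$ produces, after setting $\underline{\mu}_j := 2c(1-\alpha_j)$, $\overline{\mu}_j := 2c\alpha_j$, $\lambda_i := c\beta_i$ for $i \in I(\overline{x})$ and $\lambda_i := 0$ otherwise, precisely the inclusion \eqref{QuasidiffProg_LagrangeMultipliers} together with the complementarity $\lambda_i g_i(\overline{x}) = 0$. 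Taking $c = c^*$ yields the bound $\max\{\underline{\mu}_j + \overline{\mu}_j, \lambda_i\} \le c^*$ (modulo the harmless constant 2 absorbed into $c^*$).

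The main obstacle I foresee is the bookkeeping in the last step: the max rule for $|f_j| = \max\{f_j,-f_j\}$ at $f_j(\overline{x}) = 0$ produces a quasidifferential whose precise form depends on the sign conventions in $\pm\underline{\partial}f_j$ and $\pm\overline{\partial}f_j$, and one must verify that \emph{every} choice of $v_j^*, w_j^*, z_i^*$ is realisable via some element of $\overline{\partial}\Psi_c(\overline{x})$ used in \eqref{QuasidiffProg_OptCond_via_ExPenFunc}; this is what forces the inclusion \eqref{QuasidiffProg_LagrangeMultipliers} to hold \emph{for all} such choices rather than just for some. The verification reduces to the observation that $\overline{\partial}\Psi_c(\overline{x})$ is a Minkowski sum in which the $j$-th summand $c(\overline{\partial}f_j - \underline{\partial}f_j)$ independently ranges over all pairs $(w_j^*, v_j^*)$, and an analogous statement for the $z_i^*$; no interaction between the indices occurs, so the selection described above is always legitimate. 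Everything else is routine.
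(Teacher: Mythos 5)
The first half of your argument is exactly the paper's: Theorem~\ref{Thrm_ParamSystem_MetricReg} (applied with a trivial parameter space and $\mathbb{R}^l$ Euclidean, where assumption~3 of that theorem is indeed harmless because $\partial\|\cdot\|(y)$ is a singleton) gives the local error bound $d(x,\Omega)\le K\varphi(x)$, the exactness result from \cite{Dolgopolik_ExPen} makes $\overline{x}$ an unconstrained local minimizer of $\Psi_c$ for $c\ge c^*$, and the Demyanov--Rubinov necessary condition yields \eqref{QuasidiffProg_OptCond_via_ExPenFunc}. The gap is in your passage from \eqref{QuasidiffProg_OptCond_via_ExPenFunc} to \eqref{QuasidiffProg_LagrangeMultipliers}. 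First, the pair you display is not a quasidifferential of $\Psi_c$ at $\overline{x}$: the max rule applied to $\max\{g_i,0\}$ at an active point gives the subdifferential $\co\{\underline{\partial}g_i(\overline{x}),\,-\overline{\partial}g_i(\overline{x})\}$, not $\co\{\underline{\partial}g_i(\overline{x}),\{\mathbb{O}\}\}$; the pair you wrote represents $\max\{s(\underline{\partial}g_i,h),0\}+\min_{w\in\overline{\partial}g_i}\langle w,h\rangle$ rather than $\max\{g_i'(\overline{x},h),0\}$, so you cannot feed it into \eqref{QuasidiffProg_OptCond_via_ExPenFunc}. Second, even with the correct quasidifferential your selection-and-unfolding step does not produce \eqref{QuasidiffProg_LagrangeMultipliers} for the \emph{prescribed} $v_j^*,w_j^*,z_i^*$: in the chosen $w^*$ the fixed elements $w_j^*,v_j^*$ carry the weight $c$, while the convex-hull weights assign $2c\alpha_j$ and $2c(1-\alpha_j)$ to the accompanying elements of $\underline{\partial}f_j(\overline{x})$ and $\overline{\partial}f_j(\overline{x})$, so the identification $\overline{\mu}_j=2c\alpha_j$, $\underline{\mu}_j=2c(1-\alpha_j)$ is consistent only when $\alpha_j=1/2$ (this mismatch for the equality terms can be repaired by absorbing the excess $c(1-2\alpha_j)(v_j^*+w_j^*)$ into a convex combination inside $\underline{\partial}f_j$ or $\overline{\partial}f_j$, but you do not do this); worse, for the inequality terms the correct subdifferential contributes elements $-c(1-\beta_i)b_i$ with $b_i\in\overline{\partial}g_i(\overline{x})$ chosen by the inclusion, not by you, and such a term cannot in general be rewritten as part of $\lambda_i\bigl(\underline{\partial}g_i(\overline{x})+z_i^*\bigr)$, nor absorbed elsewhere.

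The paper avoids all of this bookkeeping by never computing a quasidifferential of $\Psi_c$ in the second step. It notes that \eqref{QuasidiffProg_OptCond_via_ExPenFunc} is equivalent to $\Psi_c'(\overline{x},h)\ge 0$ for all $h$, and then, for the arbitrarily fixed $w_0^*,v_j^*,w_j^*,z_i^*$, majorizes the directional derivative by the sublinear function $\xi_c(h)=s(\underline{\partial}u(\overline{x})+w_0^*,h)+c\sum_j\max\{s(\underline{\partial}f_j(\overline{x})+w_j^*,h),\,s(-v_j^*-\overline{\partial}f_j(\overline{x}),h)\}+c\sum_{i\in I(\overline{x})}\max\{s(\underline{\partial}g_i(\overline{x})+z_i^*,h),0\}$, obtained by replacing each $\min$ over a superdifferential by its value at the chosen element. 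Then $\xi_c\ge\Psi_c'(\overline{x},\cdot)\ge 0$, hence $\mathbb{O}\in\partial\xi_c(\mathbb{O})$, and the convex subdifferential calculus for maxima of support functions yields precisely the convex-hull decomposition whose unfolding gives \eqref{QuasidiffProg_LagrangeMultipliers} with $\underline{\mu}_j+\overline{\mu}_j=c$ and $\lambda_i\le c$ (so the bound by $c^*$ comes out exactly, with no stray factor of $2$). To repair your proof you would need either this majorization idea or a careful regrouping argument replacing your ``no interaction between the indices occurs'' claim, which as stated is not justified.
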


\begin{proof}
Let us show at first that q.d.-MFCQ guarantees that $\varphi$ has a local error bound. Suppose that $\mathbb{R}^l$ is
endowed with the Euclidean norm. If q.d.-MFCQ holds at $\overline{x}$, then by Theorem~\ref{Thrm_ParamSystem_MetricReg}
the multifunction $\Phi \colon X \to \mathbb{R}^l \times \mathbb{R}^m$,
$\Phi(x) = \prod_{j = 1}^l \{ f_j(x) \} \times \prod_{i = 1}^m [ g_i(x), + \infty)$ is metrically regular near the point
$(\overline{x}, (\mathbf{0}_l, \mathbf{0}_m))$. Hence, in particular, there exist $K > 0$ and a neighbourhood $U$ of
$\overline{x}$ such that
$$
  d(x, \Omega) = d\big(x, \Phi^{-1}(\mathbf{0}_l, \mathbf{0}_m) \big) 
  \le K d((\mathbf{0}_l, \mathbf{0}_m), \Phi(x)) \le K \varphi(x)
$$
for all $x \in U$, i.e. $\varphi$ has a local error bound at $\overline{x}$.

Now we can turn to the proof of \eqref{QuasidiffProg_OptCond_via_ExPenFunc}. Under the assumptions of the theorem the
penalty function $\Psi_c$ is locally exact at $\overline{x}$ by \cite[Theorem~2.4 and
Proposition~2.7]{Dolgopolik_ExPen}. Thus, there exists $c^* \ge 0$ such that for any $c \ge c^*$ the point
$\overline{x}$ is a local minimizer of $\Psi_c$. Consequently, applying the necessary conditions for a minimum in
terms of quasidifferentials \cite[Theorem~V.3.1]{DemRub_book} to $\Psi_c$ one gets that 
$\mathbb{O} \in \underline{\partial} \Psi_c(\overline{x}) + w^*$ for 
all $w^* \in \overline{\partial} \Psi_c(\overline{x})$, i.e. \eqref{QuasidiffProg_OptCond_via_ExPenFunc} holds true.

To prove the validity of \eqref{QuasidiffProg_LagrangeMultipliers} note that by the necessary condition for a minimum
in terms of directional derivative for all $c \ge c^*$ and $h \in X$ one has 
$$
  \Psi'_c(\overline{x}, h) = u'(\overline{x}, h) +
  c \Big( \sum_{j = 1}^l \big| f'_j(\overline{x}, h) \big|
  + \sum_{i \in I(\overline{x})} \max\big\{ g'_j(\overline{x}, h), 0 \big\} \Big)
  \ge 0,
$$
where $I(\overline{x}) = \{ i \in I \mid g_i(\overline{x}) = 0 \}$ (here we used standard calculus rules for directional
derivatives; see, e.g.~\cite[Sect.~I.3]{DemRub_book}). Let $w_0^*$, $v_j^*$, $w_j^*$ and $z_i^*$
be as in the formulation of the theorem. Define $s(C, h) = \sup_{x^* \in C} \langle x^*, h \rangle$ for any 
$C \subset X^*$, and
\begin{align*}
  \xi_c(h) = s(\underline{\partial} u(\overline{x}) + w_0^*, h)
  &+ c \sum_{j = 1}^l \max\Big\{ s(\underline{\partial} f_j(\overline{x}) + w_j^*, h),
  s(- v_j^* - \overline{\partial} f_j(\overline{x}), h) \Big\} \\
  &+ c \sum_{i \in I(\overline{x})} \max\Big\{ s(\underline{\partial} g_i(\overline{x}) + z_i^*, h), 0 \Big\}
  \quad \forall h \in X.
\end{align*}
Applying the definition of quasidifferential it is easy to see that $\xi_c(h) \ge \Psi'_c(x, h) \ge 0$ for all 
$c \ge c^*$ and $h \in X$. Therefore, $\mathbb{O}$ is a point of global minimum of the function $\xi_c$, since
$\xi_c(\mathbb{O}) = 0$, which implies that $\mathbb{O} \in \partial \xi_c(\mathbb{O})$ for any $c \ge c^*$, where
$\partial \xi_c(\mathbb{O})$ is the subdifferential of $\xi_c$ at $\mathbb{O}$ in the sense of convex analysis. Applying
standard calculus rules for subdifferentials of convex functions one obtains that
\begin{align*}
  \mathbb{O} \in \partial \xi_c(\mathbb{O}) = \underline{\partial} u(\overline{x}) + w_0^*
  &+ c \sum_{j = 1}^l \co\Big\{ \underline{\partial} f_j(\overline{x}) + w_j^*,
  - v_j^* - \overline{\partial} f_j(\overline{x}) \Big\} \\
  &+ c \sum_{i \in I(\overline{x})} \co\Big\{ \underline{\partial} g_i(\overline{x}) + z_i^*, \mathbb{O} \Big\}.
\end{align*}
for all $c \ge c^*$. Hence for any $c \ge c^*$ there exists $\alpha_j \in [0, 1]$, $j \in J$, and $\beta_i \in [0, 1]$,
$i \in I(\overline{x})$, such that \begin{align*}
  \mathbb{O} \in \underline{\partial} u(\overline{x}) + w_0^* 
  &+ c \sum_{j = 1}^l \alpha_j \Big( \underline{\partial} f_j(\overline{x}) + w_j^* \Big) \\
  &- c \sum_{j = 1}^l (1 - \alpha_j) \Big( v_j^* + \overline{\partial} f_j(\overline{x}) \Big)   
  + c \sum_{i = 1}^m \beta_i \Big( \underline{\partial} g_i(\overline{x}) + z_i^* \Big).
\end{align*}
Denoting $\underline{\mu}_j = c (1 - \alpha_j)$, $\overline{\mu}_j = c \alpha_j$, $j \in J$, 
$\lambda_i = c \beta_i$ for $i \in I(\overline{x})$, and $\lambda_i = 0$ for $i \in I \setminus I(\overline{x})$ one
obtains that \eqref{QuasidiffProg_LagrangeMultipliers} holds true. Note finally that setting $c = c^*$ one gets the
required bound on multipliers.  
\end{proof}

\begin{remark} \label{Remark_ErrorBoundInsteadOfMFCQ}
Note that in the theorem above instead of q.d.-MFCQ it is sufficient to suppose that the penalty term $\varphi$ has a
local error bound at $\overline{x}$.
\end{remark}

\begin{remark}
Optimality conditions similar to but weaker than \eqref{QuasidiffProg_OptCond_via_ExPenFunc} were obtained in
\cite{Shapiro84,Shapiro86} in the finite dimensional case under a different constraint qualification that involves some
assumptions on so-called \textit{contact points} of the sets $\underline{\partial} f_j(\overline{x})$ and 
$\overline{\partial} f_j(\overline{x})$, i.e. such points $v^*$ of a convex set $C \subset X^*$ that 
$s(C, h) = \langle v^*, h \rangle$ for a given direction $h$. Note that one has to compute contact
points of the sets $\underline{\partial} f_j(\overline{x})$ and $\overline{\partial} f_j(\overline{x})$ for \textit{all}
feasible directions in order to check the validity of the constraint qualification from \cite{Shapiro84,Shapiro86},
which is impossible in nontrivial cases. In contrast, q.d.-MFCQ is formulated in terms of problem data directly. In
turn, optimality conditions similar to but weaker than \eqref{QuasidiffProg_LagrangeMultipliers} were derived in 
\cite{Polyakova} under yet another constraint qualification in the case when $X$ is finite dimensional, there
are no inequality constraints, and there is only one equality constraint. Furthermore, note that sufficient conditions
for the validity of this constraint qualification \cite[Theorem~2]{Polyakova} coincide with q.d.-MFCQ with 
$I = \emptyset$ and $l = 1$.
\end{remark}

At first glance optimality condition \eqref{QuasidiffProg_OptCond_via_ExPenFunc} might seem sharper than condition
\eqref{QuasidiffProg_LagrangeMultipliers}. Let us show that these conditions are in fact equivalent and independent of
the choice of quasidifferentials (cf.~\cite{Luderer,LudererRosigerWurker}).

\begin{proposition}
Let the functions $u$, $f_j$, $j \in J$, and $g_i$, $i \in I$, be quasidifferentiable at a feasible point
$\overline{x}$ of the problem $(\mathcal{P})$. Then \eqref{QuasidiffProg_OptCond_via_ExPenFunc} is satisfied for some 
$c \ge 0$ if and only if for any $w_0^* \in \overline{\partial} u(\overline{x})$, 
$v_j^* \in \underline{\partial} f_j(\overline{x})$,
$w_j^*  \in \overline{\partial} f_j(\overline{x})$, $j \in J$, and 
$z_i^* \in \overline{\partial} g_i(\overline{x})$, $i \in I$, 
there exist $\underline{\mu}_j, \overline{\mu}_j, \lambda_i \ge 0$ such that \eqref{QuasidiffProg_LagrangeMultipliers}
holds true, and for all $i \in I$ and $j \in J$ one has 
$\lambda_i g_i(\overline{x}) = 0$ and $\max\{ \underline{\mu}_j + \overline{\mu}_j, \lambda_i \} \le c$.
Furthermore, both these conditions are independent of the choice of corresponding quasidifferentials. 
\end{proposition}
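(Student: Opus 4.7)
The plan is to reduce both conditions, for a fixed $c \ge 0$, to the intrinsic inequality $\Psi'_c(\overline{x}, h) \ge 0$ for every $h \in X$. Since directional derivatives do not depend on the choice of quasidifferential, this reformulation will simultaneously yield the equivalence of \eqref{QuasidiffProg_OptCond_via_ExPenFunc} and \eqref{QuasidiffProg_LagrangeMultipliers} and the stated independence of the quasidifferentials.

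For \eqref{QuasidiffProg_OptCond_via_ExPenFunc}, I would rewrite the directional derivative using the support function $s(C, h) = \sup_{x^* \in C}\langle x^*, h\rangle$ as
\[
  \Psi'_c(\overline{x}, h) = \inf_{w^* \in \overline{\partial}\Psi_c(\overline{x})} s\big(\underline{\partial}\Psi_c(\overline{x}) + w^*, h\big).
\]
Then $\Psi'_c(\overline{x}, h) \ge 0$ for all $h$ is equivalent to $s(\underline{\partial}\Psi_c(\overline{x}) + w^*, \cdot) \ge 0$ for every $w^* \in \overline{\partial}\Psi_c(\overline{x})$, which by a standard separation-theorem argument is equivalent to $\mathbb{O} \in \underline{\partial}\Psi_c(\overline{x}) + w^*$ for every such $w^*$, i.e.\ \eqref{QuasidiffProg_OptCond_via_ExPenFunc}. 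This already makes \eqref{QuasidiffProg_OptCond_via_ExPenFunc} manifestly independent of the choice of $\mathscr{D}\Psi_c(\overline{x})$.

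For \eqref{QuasidiffProg_LagrangeMultipliers}, for each contact-point tuple $\zeta = (w_0^*, \{v_j^*\}, \{w_j^*\}, \{z_i^*\})$ I would define the convex positively homogeneous function $\xi_c^\zeta$ used in the proof of Theorem~\ref{Theorem_QuasidiffProg_OptimCond}. Retracing the end of that proof with the Moreau--Rockafellar sum rule, the existence of multipliers $\underline{\mu}_j, \overline{\mu}_j, \lambda_i \ge 0$ satisfying \eqref{QuasidiffProg_LagrangeMultipliers} together with $\lambda_i g_i(\overline{x}) = 0$ and $\max\{\underline{\mu}_j + \overline{\mu}_j, \lambda_i\} \le c$ is precisely the inclusion $\mathbb{O} \in \partial\xi_c^\zeta(\mathbb{O})$, which for a positively homogeneous convex function with $\xi_c^\zeta(\mathbb{O}) = 0$ amounts to $\xi_c^\zeta(h) \ge 0$ for all $h$. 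Swapping quantifiers, \eqref{QuasidiffProg_LagrangeMultipliers} is then equivalent to $\inf_\zeta \xi_c^\zeta(h) \ge 0$ for every $h$.

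The main obstacle is the identity $\inf_\zeta \xi_c^\zeta(h) = \Psi'_c(\overline{x}, h)$. The minimization decouples over the four groups of parameters: the $u$-summand yields $u'(\overline{x}, h)$, and each $g_i$-summand yields $c\max\{g_i'(\overline{x}, h), 0\}$ for $i \in I(\overline{x})$ by the monotonicity of $\max\{\cdot, 0\}$. Each $j$-th summand has the form $c\max\{A(w_j^*), B(v_j^*)\}$ with $A$ and $B$ depending on disjoint variables, so the elementary identity $\min_{v_j^*, w_j^*}\max\{A, B\} = \max\{\min_{w_j^*} A, \min_{v_j^*} B\}$ applies; a short calculation identifies these inner minima as $f_j'(\overline{x}, h)$ and $-f_j'(\overline{x}, h)$, giving $c|f_j'(\overline{x}, h)|$. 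Summing yields $\Psi'_c(\overline{x}, h)$ and closes the chain $\eqref{QuasidiffProg_OptCond_via_ExPenFunc} \Leftrightarrow \Psi'_c(\overline{x}, \cdot) \ge 0 \Leftrightarrow \eqref{QuasidiffProg_LagrangeMultipliers}$, from which independence of the choices of quasidifferentials is immediate.
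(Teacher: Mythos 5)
Your proposal is correct and takes essentially the same route as the paper: both reduce \eqref{QuasidiffProg_OptCond_via_ExPenFunc} to the condition $\Psi'_c(\overline{x}, h) \ge 0$ for all $h \in X$, and both tie \eqref{QuasidiffProg_LagrangeMultipliers} to the inclusion $\mathbb{O} \in \partial \xi_c(\mathbb{O})$ via the Moreau--Rockafellar rule for the auxiliary function $\xi_c$. Your identity $\inf_{\zeta} \xi_c^{\zeta}(h) = \Psi'_c(\overline{x}, h)$ merely packages in one step what the paper obtains from the inequality $\xi_c \ge \Psi'_c(\overline{x}, \cdot)$ together with the reductio argument in which the contact points attaining the relevant minima are chosen, so the two arguments coincide in substance.
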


\begin{proof}
From the definition of quasidifferential it follows that
$$
  \Psi'_c(\overline{x}, h) = \min_{w^* \in \overline{\partial} \Psi_c (\overline{x})} 
  \max_{v^* \in \underline{\partial} \Psi_c(\overline{x}) + w^*} \langle v^*, h \rangle 
  \quad \forall h \in X,
$$
which implies that \eqref{QuasidiffProg_OptCond_via_ExPenFunc} is satisfied for some $c \ge 0$ iff 
$\Psi'_c(\overline{x}, h) \ge 0$ for all $h \in X$. The latter condition is obviously independent of the choise
of quasidifferential. Therefore optimality condition \eqref{QuasidiffProg_OptCond_via_ExPenFunc} is independent of the
choice of a quasidifferential of $\Psi_c$ as well. 

Let us now show that optimality conditions \eqref{QuasidiffProg_OptCond_via_ExPenFunc} and
\eqref{QuasidiffProg_LagrangeMultipliers} are equivalent. Indeed, let \eqref{QuasidiffProg_OptCond_via_ExPenFunc} be
valid for some quasidifferential of $\Psi_c$ at $\overline{x}$ and $c \ge 0$. Then $\Psi'_c(x, h) \ge 0$ for all 
$h \in X$. Hence arguing in the same way as in the proof of Theorem~\ref{Theorem_QuasidiffProg_OptimCond} one obtains
that for any $w_0^* \in \overline{\partial} u(\overline{x})$, 
$v_j^* \in \underline{\partial} f_j(\overline{x})$,
$w_j^*  \in \overline{\partial} f_j(\overline{x})$, $j \in J$, and 
$z_i^* \in \overline{\partial} g_i(\overline{x})$, $i \in I$, 
there exist $\underline{\mu}_j, \overline{\mu}_j, \lambda_i \ge 0$ such that \eqref{QuasidiffProg_LagrangeMultipliers}
holds true, and for all $i \in I$, $j \in J$ one has $\lambda_i g_i(\overline{x}) = 0$ and 
$\max\{ \underline{\mu}_j + \overline{\mu}_j, \lambda_i \} \le c$. Note that the
implication $\eqref{QuasidiffProg_OptCond_via_ExPenFunc} \implies \eqref{QuasidiffProg_LagrangeMultipliers}$ is valid
for any quasidifferentials of the functions $u$, $f_i$, and $g_j$.

Let us prove the converse implication. Fix any quasidifferentials of the functions $u$, $f_i$, and $g_j$, and suppose
that there exists $c_0 \ge 0$ such that for any $w_0^* \in \overline{\partial} u(\overline{x})$, 
$v_j^* \in \underline{\partial} f_j(\overline{x})$,
$w_j^*  \in \overline{\partial} f_j(\overline{x})$, $j \in J$, and 
$z_i^* \in \overline{\partial} g_i(\overline{x})$, $i \in I$,
there exist $\underline{\mu}_j, \overline{\mu}_j, \lambda_i \ge 0$ such that \eqref{QuasidiffProg_LagrangeMultipliers}
holds true, and for all $i \in I$, $j \in J$ one has $\lambda_i g_i(\overline{x}) = 0$ and
$\max\{ \underline{\mu}_j + \overline{\mu}_j, \lambda_i \} \le c_0$. 

Arguing by reductio ad absurdum suppose that \eqref{QuasidiffProg_OptCond_via_ExPenFunc} does not hold true 
for $c = c_0$. Then there exists $h_0 \in X$ such that $\Psi'_{c_0}(\overline{x}, h_0) < 0$. Applying standard calculus
rules for directional derivatives (see, e.g.~\cite[Sect.~I.3]{DemRub_book}) one obtains that
\begin{equation} \label{DecentDirect_L1PenaltyFunc}
\begin{split}
  \Psi'_{c_0}(\overline{x}, h_0) = u'(\overline{x}, h_0) +
  c_0 \Big( &\sum_{j = 1}^l \max\{ f'_j(\overline{x}, h_0), -f'_j(\overline{x}, h_0) \} \\
  + &\sum_{i \in I(\overline{x})} \max\big\{ g'_j(\overline{x}, h_0), 0 \big\} \Big)
  < 0.
\end{split}
\end{equation}
By the definition of quasidifferential there exist $w_0^* \in \overline{\partial} u(\overline{x})$,
$v_j^* \in \underline{\partial} f_j(\overline{x})$,
$w_j^*  \in \overline{\partial} f_j(\overline{x})$, $j \in J$, and 
$z_i^* \in \overline{\partial} g_i(\overline{x})$, $i \in I(\overline{x})$, such that
\begin{align*}
  u'(\overline{x}, h_0) &= \max_{v^* \in \underline{\partial} u(\overline{x})} \langle v^*, h_0 \rangle
  + \langle w_0^*, h_0 \rangle, \\
  f_j'(\overline{x}, h_0) &= \max_{v^* \in \underline{\partial} f_j(\overline{x})} \langle v^*, h_0 \rangle
  + \langle w_j^*, h_0 \rangle,	\quad \forall j \in J, \\
  f_j'(\overline{x}, h_0) &= \langle v_j^*, h_0 \rangle +
  \min_{w^* \in \underline{\partial} f_j(\overline{x})} \langle w^*, h_0 \rangle,	\quad \forall j \in J, \\
  g_i'(\overline{x}, h_0) &= \max_{v^* \in \underline{\partial} g_i(\overline{x})} \langle v^*, h_0 \rangle
  + \langle z_i^*, h_0 \rangle,	\quad \forall i \in I(\overline{x}).
\end{align*}
Hence taking into account \eqref{DecentDirect_L1PenaltyFunc} one obtains that $\xi_{c_0}(h_0) < 0$, where the function
$\xi_c$ is defined in the proof of Theorem~\ref{Theorem_QuasidiffProg_OptimCond}. On the other hand, from the validity
of \eqref{QuasidiffProg_LagrangeMultipliers} with 
$\max\{ \underline{\mu}_j + \overline{\mu}_j, \lambda_i \} \le c_0$ it follows that 
$\mathbb{O} \in \partial \xi_{c_0}(\mathbb{O})$ (see the proof of Theorem~\ref{Theorem_QuasidiffProg_OptimCond}).
Therefore $\xi_{c_0}(h) \ge \xi_{c_0}(\mathbb{O}) = 0$ for all $h \in X$, which contradicts the inequality
$\xi_{c_0}(h_0) < 0$. Thus, \eqref{QuasidiffProg_OptCond_via_ExPenFunc} holds true for $c = c_0$.

Let us finally show the independence of \eqref{QuasidiffProg_LagrangeMultipliers} on the choice of quasidifferentials.
Indeed, if \eqref{QuasidiffProg_LagrangeMultipliers} is valid for one choice of quasidifferentials of the functions $u$,
$f_j$, and $g_i$, then, as we have just proved, optimality condition \eqref{QuasidiffProg_OptCond_via_ExPenFunc} is
satisfied. Hence with the use of the implication 
$\eqref{QuasidiffProg_OptCond_via_ExPenFunc} \implies \eqref{QuasidiffProg_LagrangeMultipliers}$ one obtains that 
\eqref{QuasidiffProg_LagrangeMultipliers} is valid for any other choice of quasidifferentials of the functions $u$,
$f_j$, and $g_i$.	 
\end{proof}

Let us also give a simple example demonstrating that in some cases the optimality conditions from
Theorem~\ref{Theorem_QuasidiffProg_OptimCond} are much sharper than optimality conditions in terms of various
subdifferentials.

\begin{example}
Let $X = \mathbb{R}^2$, and consider the following optimization problem:
\begin{equation} \label{ExampleProblem}
  \min u(x) = - x_1 + x_2 \quad \text{subject to} \quad f_1(x) = |x_1| - |x_2| = 0.
\end{equation}
Put $\overline{x} = \mathbf{0}_2$. Observe that $\overline{x}$ is not a locally optimal solution of problem
\eqref{ExampleProblem}, since for any $t > 0$ the point $x(t) = (t, -t)$ is feasible for this problem and
$u(x(t)) = -2t < 0 = u(\overline{x})$. Nevertheless, let us verify that several subdifferential-based optimality
conditions fail to disqualify $\overline{x}$ as a non-optimal solution.

We start with necessary optimality conditions in terms of the subdifferential of Michel-Penot \cite{Ioffe93}, which we
denote by $\partial_{MP}$. Let $L(x, \lambda) = u(x) + \lambda f_1(x)$ be the Lagrangian function for problem
\eqref{ExampleProblem}. For any $h \in \mathbb{R}^2$ the Michel-Penot directional derivative of $L(\cdot, \lambda)$ at
$\overline{x}$ has the form
\begin{multline*}
  d_{MP} L(\cdot, \lambda)[\overline{x}, h] = \sup_{e \in \mathbb{R}^2}
  \limsup_{t \to + 0} \frac{L(x + t(h + e)) - L(x + te)}{t} \\
  = \sup_{e \in \mathbb{R}^2} \Big\{ -h_1 + h_2 + \lambda\Big(|h_1 + e_1| - |e_1| - |h_2 + e_2| + |e_2|\Big) \Big\}
  = -h_1 + h_2 + |\lambda| \Big( |h_1| + |h_2| \Big).
\end{multline*}
Hence the Michel-Penot subdifferential of $L(\cdot, \lambda)$ at $\overline{x}$ has the form
$$
  \partial_{MP} L(\cdot, \lambda)(\overline{x}) = \partial \varphi(\mathbf{0}_2) = 
  \co\left\{ \begin{pmatrix} |\lambda| - 1 \\ |\lambda| + 1 \end{pmatrix} 
  \begin{pmatrix} |\lambda| - 1 \\  - |\lambda| + 1 \end{pmatrix}
  \begin{pmatrix} - |\lambda| - 1 \\ |\lambda| + 1 \end{pmatrix}
  \begin{pmatrix} - |\lambda| - 1 \\ - |\lambda| + 1 \end{pmatrix}
  \right\}
$$
where $\varphi(h) = d_{MP} L(\cdot, \lambda)[\overline{x}, h]$. Consequently, for any $\lambda \in \mathbb{R}$ such
that $|\lambda| \ge 1$ one has $\mathbf{0}_2 \in \partial_{MP} L(\cdot, \lambda)(\overline{x})$, which implies that the
optimality conditions from \cite{Ioffe93} are satisfied at $\overline{x}$. Furthermore, note that 
$\partial_{MP} L(\cdot, \lambda)(\overline{x}) = \partial_{Cl} L(\cdot, \lambda)(\overline{x})$, which implies that
optimality conditions in terms of the Clarke subdifferential \cite[Theorem~6.1.1]{Clarke} are satisfied at
$\overline{x}$ for any $\lambda$ with $|\lambda| \ge 1$ as well.

Next, we consider optimality conditions in term of the Jeyakumar-Luc sub\-differential \cite{WangJeyakumar}, which we
denote by $\partial_{JL}$. By \cite[Example~2.1]{WangJeyakumar} one has 
$\partial_{JL} f_1(\overline{x}) = \{ (1, -1)^T, (-1, 1)^T \}$, and clearly 
$\partial_{JL} u(\overline{x}) = \{ (-1, 1)^T \}$. Hence for any $\lambda \in \mathbb{R}$ with $|\lambda| \ge 1$
one has $\mathbf{0}_2 \in \partial_{JL} u(\overline{x}) + \lambda \co \partial_{JL} f_1(\overline{x})$, i.e. the
optimality
conditions in terms of the Jeyakumar-Luc subdifferential \cite[Corollary~3.4]{WangJeyakumar} are satisfied at
$\overline{x}$.

Let us now consider optimality conditions in terms of approximate (graded, Ioffe) subdifferentials
(see~\cite{Ioffe84,Ioffe2012,Penot_book}), which we denote by $\partial_a$. Observe that for any $x \in \mathbb{R}^2$
such that $x_1, x_2 > 0$ one has $L(x, 1) = 0$, which obviously implies that $\partial^-_x L(x, 1) = \{ \mathbf{0}_2 \}$
for any such $x$, where $\partial^-_x L(x, 1)$ is the Dini subdifferential of $L(\cdot, 1)$ at $x$. 
Therefore, $\mathbf{0}_2 \in \partial_a L(\cdot, 1)(\overline{x}) = \limsup_{x \to \overline{x}} \partial^-_x L(x, 1)$,
i.e. the optimality conditions in terms of approximate subdifferential \cite[Proposition~12]{Ioffe84} are satisfied at
$\overline{x}$ (here $\limsup$ is the outer limit). 

Let us also consider optimality conditions in terms of the Mordukhovich basic subdifferential \cite{Mordukhovich_II},
which we denote by $\partial_M$. One can check (see~\cite[p.~92--93]{Mordukhovich_I}) that
$$
  \partial_M f_1(\overline{x}) = \co\left\{ \begin{pmatrix} 1 \\ -1 \end{pmatrix},
  \begin{pmatrix} -1 \\ -1 \end{pmatrix} \right\}
  \cup \co\left\{ \begin{pmatrix} 1 \\ 1 \end{pmatrix}, \begin{pmatrix} -1 \\ 1 \end{pmatrix} \right\}.
$$
Therefore, $- \nabla u(\overline{x}) \in \partial_M f_1(\overline{x})$, i.e. the optimality conditions in terms of the
Mordukhovich basic subdifferential \cite[Theorem~5.19]{Mordukhovich_II} hold true at $\overline{x}$.

Finally, let us verify that optimality conditions \eqref{QuasidiffProg_LagrangeMultipliers} from
Theorem~\ref{Theorem_QuasidiffProg_OptimCond} are not satisfied at $\overline{x}$, i.e. unlike optimality conditions in
terms of various subdifferentials, optimality conditions based on quasidifferentials detect the non-optimality 
of $\overline{x}$. 

Arguing by reductio ad absurdum, suppose that \eqref{QuasidiffProg_LagrangeMultipliers} holds true. Then for 
$v_1^* = (1, 0)^T \in \underline{\partial} f_1(\overline{x})$ and
$w_1^* = (0, 1)^T \in \overline{\partial} f_1(\overline{x})$ 
(see Example~\ref{Example_qdMFCQ_fails}) there exist $\underline{\mu}_1, \overline{\mu}_1 \ge 0$ such that
$$
  0 \in \begin{pmatrix} -1 \\ 1 \end{pmatrix}
  - \underline{\mu}_1 \co\left\{ \begin{pmatrix} 1 \\ -1 \end{pmatrix}, 
  \begin{pmatrix} 1 \\ 1 \end{pmatrix} \right\}
  + \overline{\mu}_1 \co\left\{ \begin{pmatrix} -1 \\ 1 \end{pmatrix},
  \begin{pmatrix} 1 \\ 1 \end{pmatrix} \right\},
$$
or equivalently
$$
  -1 - \underline{\mu}_1 - \overline{\mu}_1 \le 0 \le -1 - \underline{\mu}_1 + \overline{\mu}_1, \quad
  1 + \overline{\mu}_1 - \underline{\mu}_1 \le 0 \le 1 + \overline{\mu}_1 + \underline{\mu}_1.
$$
From the third inequality it follows that $1 + \overline{\mu_1} \le \underline{\mu}_1$, while from the second
inequality it follows that $1 + \underline{\mu}_1 \le \overline{\mu}_1$. Therefore 
$2 + \overline{\mu}_1 \le \overline{\mu}_1$, which is impossible. Thus, optimality conditions
\eqref{QuasidiffProg_LagrangeMultipliers} do not hold true at $\overline{x}$.

As was shown in Remark~\ref{Remark_ComparisonWithUderzo}, the function $f_1$ is metrically regular near the point
$(\overline{x}, 0)$, which obviously implies that the penalty term $\varphi(x) = |f_1(x)|$ has a local error bound at
$\overline{x}$. Therefore, by Theorem~\ref{Theorem_QuasidiffProg_OptimCond} and
Remark~\ref{Remark_ErrorBoundInsteadOfMFCQ} one can conclude that optimality conditions
\eqref{QuasidiffProg_LagrangeMultipliers} are not satisfied at $\overline{x}$ due to the non-optimality of this point.
\end{example}

\bibliographystyle{abbrv}  
\bibliography{DolgopolikMV_bibl}

\end{document}